\newtheorem{theorem}{Theorem}[section]
\newtheorem{lemma}[theorem]{Lemma}
\newtheorem{proposition}[theorem]{Proposition}
\newtheorem{definition}[theorem]{Definition}
\newtheorem{corollary}[theorem]{Corollary}
\theoremstyle{remark}
\numberwithin{equation}{section}
\newcommand{\mo}{{-1}}
\newcommand{\calM}{\ensuremath{\mathcal{M}}}
\newcommand{\frakh}{\ensuremath{\mathfrak{h}}}
\begin{document}

\title{A Cheeger type inequality in finite Cayley sum graphs}

\author{Arindam Biswas}
\address{Universit\"at Wien, Fakult\"at f\"ur Mathematik, Oskar-Morgenstern-Platz 1, 1090 Wien, Austria}
\curraddr{}
\email{arindam.biswas@univie.ac.at}
\thanks{}

\author{Jyoti Prakash Saha}
\address{Department of Mathematics, Indian Institute of Science Education and Research Bhopal, Bhopal Bypass Road, Bhauri, Bhopal 462066, Madhya Pradesh,
India}
\curraddr{}
\email{jpsaha@iiserb.ac.in}
\thanks{}

\subjclass[2010]{05C25, 05C50, 05C75}

\keywords{Expander graphs, Cheeger inequality, Spectra of Cayley sum graphs}

\begin{abstract}
Let $G$ be a finite group and $S$ be a symmetric generating set of $G$ with $|S| = d$. We show that if the undirected Cayley sum graph $C_{\Sigma}(G,S)$ is an expander graph and is non-bipartite, then the spectrum of its normalised adjacency operator is bounded away from $-1$. We also establish an explicit lower bound for the spectrum of these graphs, namely, the non-trivial eigenvalues of the normalised adjacency operator lies in the interval $\left(-1+\frac{h(G)^{4}}{\eta}, 1-\frac{h(G)^{2}}{2d^{2}}\right]$, where $h(G)$ denotes the (vertex) Cheeger constant of the $d$-regular graph $C_{\Sigma}(G,S)$ and $\eta = 2^{9}d^{8}$. Further, we improve upon a recently obtained bound on the non-trivial spectrum of the normalised adjacency operator of the non-bipartite Cayley graph $C(G,S)$.
\end{abstract}

\maketitle

\section{Introduction}

Let $G$ be a finite group, and $S$ be a symmetric generating set of $G$ not containing the identity element with $|S| = d$. The Cayley sum graph $C_\Sigma(G, S)$ is the graph having $G$ as its set of vertices and for $g, h\in G$, the vertex $h$ is adjacent to $g$ if $h = g^\mo s$ for some element $s\in S$. These are classical combinatorial objects, e.g., see \cite{HandBookCombi} and \cite{GreenChromatic}. In this article, we consider the undirected Cayley sum graph and this is equivalent to saying that $S$ is closed under conjugation (see Lemma \ref{undirected-Cayley-sum}). We also recall that the Cayley graph of $G$ (sometimes called the Cayley difference graph) denoted by $C(G,S)$ is the graph having $G$ as its set of vertices and the vertex $h$ is adjacent to $g$ if $h = gs$ for some element $s\in S$. The structure of $C(G,S)$ and $C_{\Sigma}(G,S)$ can be very different. This can be seen considering the Cayley graph $C(G,S)$ and the Cayley sum graph $C_{\Sigma}(G,S)$ of $G = \mathbb{Z}/n\mathbb{Z}$ ($n\geqslant 5$) with respect to the symmetric generating set $S = \lbrace \pm 1\rbrace$. The former is always a cycle graph while the latter need not be so (for instance, it admits loops whenever $n$ is odd). 

In the following, the graphs and the multi-graphs considered are all undirected. The multi-graphs may possibly admit multiple edges. Moreover, the graphs and the multi-graphs considered may admit loops. Given a finite $d$-regular multi-graph $\mathbb{G} = (V,E)$ where $V$ denotes the set of vertices and $E\subseteq V\times V$ the multi-set of edges, we have the normalised adjacency matrix $T$ of size $|V|\times |V|$ whose eigenvalues lie in the interval $[-1,1]$. The normalised Laplacian matrix of $\mathbb{G}$ is defined by $$L:= I_{|V|} - T$$
where $I_{|V|}$ denotes the identity matrix of size $|V|\times |V|$. The eigenvalues of $L$ lie in the interval $[0,2]$. Denote the eigenvalues of $T$ and the eigenvalues of $L$ as $\lbrace t_{i} : i= 1, \cdots, |V|\rbrace $ and 
$\lbrace \lambda_{i} : i= 1, \cdots, |V|\rbrace $ respectively such that $\lambda_{i} = 1 - t_{i}$ and 
\begin{align*}
 -1 \leqslant t_{n} \leqslant t_{n-1} \leqslant  \cdots \leqslant t_{2}\leqslant t_{1} & = 1\\
 0 = \lambda_{1} \leqslant \lambda_{2} \leqslant \cdots \leqslant \lambda_{n-1}\leqslant \lambda_{n} & \leqslant 2.
\end{align*}

The multi-graph $\mathbb{G}$ is connected if and only if $\lambda_{2} > 0$, while it is bipartite if and only if $\lambda_{n} = 2$ (equivalently $t_{n} = -1$).

	Let the multi-graph $\mathbb{G} = (V,E)$ has vertex set $V$ and edge multi-set $E$. For a subset $V_{1}\subseteq V$, we denote the neighbourhood of $V_{1}$ as $N(V_{1})$ where, 
$$N(V_{1}) := \lbrace v\in V : (v, v_{1})\in E \text{ for some } v_{1}\in V_{1}\rbrace.$$
Then the boundary of $V_{1}$ is defined as $ \delta(V_{1}) := N(V_{1})\backslash V_{1}$.

\begin{definition}[Vertex Cheeger constant]
The vertex Cheeger constant of the multi-graph $\mathbb{G} = (V,E)$, denoted by $h(\mathbb{G})$, is defined as 
$$h(\mathbb{G}) := \inf \left\lbrace \frac{|\delta(V_{1})|}{|V_{1}|} : \emptyset \neq V_{1}\subseteq V, |V_{1}|\leqslant \frac{|V|}{2} \right\rbrace.$$
\end{definition}

Next, we recall the notion of an expander graph as stated in \cite{AlonEigenvalueExpand}.
\begin{definition}[$(n,d,\varepsilon)$-expander]\label{vexp}
Let $\varepsilon>0$. An $(n,d,\varepsilon)$-expander is a graph $(V,E)$ on $|V| =n$ vertices, having maximal degree $d$, such that for every set $\emptyset \neq V_{1}\subseteq V$ satisfying $|V_{1}|\leqslant \frac{n}{2}$, $|\delta(V_{1})|\geqslant \varepsilon|V_{1}|$ holds (equivalently, $h(\mathbb{G})\geqslant \varepsilon).$
\end{definition}

We are interested in the spectrum of the expander graphs. It was remarked in \cite{BGGTExpansionSimpleLie} that if the eigenvalues of the normalised Laplacian matrix of non-bipartite finite Cayley graphs are bounded away from $2$. Recently the first author established an explicit upper bound. See \cite[Theorem 1.4]{BiswasCheegerCayley}.

In this article, we show that a similar phenomenon occurs for the spectrum of the Cayley sum graph $C_{\Sigma}(G,S)$.

\begin{theorem}\label{mainthm}
Let the Cayley sum graph $C_{\Sigma}(G,S)$ be an expander with $|S| = d$. Let $h(G)$ denote its vertex Cheeger constant. Then if $C_{\Sigma}(G,S)$ is non-bipartite, we have
$$\lambda_{n}< 2 - \frac{h(G)^{4}}{2^{9}d^{8}}\,\,\, (\text{equivalently } -1 + \frac{h(G)^{4}}{2^{9} d^{8}} < t_{n}), $$
where $\lambda_{n}$ (respectively $t_{n}$) is the largest (respectively smallest) eigenvalue of the normalised Laplacian matrix (respectively normalised adjacency matrix) of $C_\Sigma(G, S)$. 
\end{theorem}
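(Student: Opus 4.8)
The plan is to pass from the vertex Cheeger constant to a lower bound on $\lambda_n$'s deficit from $2$ by exhibiting, for each eigenvector $f$ with eigenvalue $t_n$ close to $-1$, a combinatorial obstruction to bipartiteness. The standard route for Cayley graphs is: if $t_n$ is very close to $-1$, the corresponding eigenfunction $f$ of the normalised adjacency operator behaves almost like the sign function of a bipartition; rounding $f$ produces a partition $V = A \sqcup B$ with very few edges inside $A$ and inside $B$, which — since the graph is connected and non-bipartite, so \emph{some} such edge must exist — is impossible once the deficit $2 - \lambda_n$ is below the threshold. The new feature here is that $C_\Sigma(G,S)$ is a Cayley \emph{sum} graph, so adjacency is $h \sim g \iff h = g^{-1}s$; the incidence structure is governed by the map $g \mapsto g^{-1}$ composed with right translations, and loops occur exactly at involutions times elements of $S$. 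I would first record, as a lemma, the precise local structure of $C_\Sigma(G,S)$ (vertex-transitivity under the relevant action, $d$-regularity counting loops with the appropriate multiplicity, and the description of edges inside a set $g^{-1}S$), so that the rounding argument can be run.

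The key steps, in order, are as follows. \textbf{Step 1.} Fix a real eigenfunction $f : V \to \mathbb{R}$ with $Tf = t_n f$, normalised so that $\max_v |f(v)| = 1$, and write $\varepsilon := 1 + t_n \geq 0$; the goal is to show $\varepsilon \geq h(G)^4/(2^9 d^8)$ whenever the graph is non-bipartite. \textbf{Step 2.} From the Rayleigh-quotient identity $\langle (I+T)f, f\rangle = \varepsilon \|f\|^2$, rewrite the left side as $\frac{1}{d}\sum_{(u,v)\in E}(f(u)+f(v))^2$ (with loops contributing $\frac{1}{d}(2f(v))^2$ appropriately), so that $\sum_{(u,v)\in E}(f(u)+f(v))^2 = \varepsilon d \|f\|^2$ is small. \textbf{Step 3.} Define $A := \{v : f(v) > 0\}$ and $B := \{v : f(v) \leq 0\}$ (or a thresholded variant using level sets of $f$ at height $\pm t$ for a well-chosen $t$, to control $|f(u)+f(v)|$ from below on monochromatic edges), and use Step 2 together with a Cauchy--Schwarz / co-area argument to bound the number of edges with both endpoints on the same side, and the number of loops, by something like $C d \cdot \sqrt{\varepsilon} \cdot |V|$. \textbf{Step 4.} Invoke the vertex Cheeger hypothesis $h(\mathbb{G}) = h(G)$ to convert "few monochromatic edges" into "$\delta$ of the smaller class is large unless the class is tiny or the whole graph"; combined with connectedness this forces that if $\varepsilon$ is below the stated threshold, then there are \emph{no} monochromatic edges and no loops, i.e. the graph is bipartite, contradiction. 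Tracking the constants through Steps 3--4 — the Cauchy--Schwarz loss, the conversion from edge-expansion to vertex-expansion ($h$ versus the edge-Cheeger constant, costing factors of $d$), and the level-set optimisation over $t$ — is what produces the exponents $4$ and $8$ and the constant $2^9$.

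The main obstacle I anticipate is \textbf{Step 3}, specifically getting the dependence on $d$ and on $h(G)$ tight enough to land the claimed $h(G)^4/(2^9 d^8)$ rather than a weaker polynomial bound. Two subtleties are peculiar to the Cayley \emph{sum} setting: first, loops (at elements $g$ with $g^2 \in S$, roughly) must be handled carefully in the quadratic form and in the definition of the boundary, since a loop at $v$ contributes to $(f(v)+f(v))^2 = 4f(v)^2$ and signals $f(v)$ small but does not help separate $A$ from $B$; second, vertex-transitivity for $C_\Sigma(G,S)$ is with respect to the action $x \cdot g = x g$ on the "twisted" incidence, so one cannot blindly quote Cayley-graph symmetry and must verify that the averaging/normalisation arguments still apply. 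I would handle loops by first passing to the induced subgraph on $\{v : |f(v)| \geq 1/2\}$ (say), where $f$ cannot vanish, absorbing the loop contribution into the error term, and then run the rounding there; the transitivity issue is dealt with by the structural lemma mentioned above. The non-bipartiteness hypothesis enters exactly once, at the end of Step 4, as the statement "the graph has at least one monochromatic edge or loop", which is precisely the negation of the configuration that small $\varepsilon$ would force. Finally, the upper bound $\lambda_n \le \cdots$, i.e. $t_n \le 1 - h(G)^2/(2d^2)$ in the abstract, is the easier, classical direction (a discrete Cheeger inequality for $\lambda_2$ applied after noting $\lambda_n \ge \lambda_2$ is not quite it — rather one uses the standard bound $\lambda_2 \geq h^2/(2d^2)$-type estimate transplanted), and I would dispatch it quickly by citing or adapting the corresponding statement for ordinary Cayley graphs.
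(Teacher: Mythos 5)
Your strategy---round the eigenfunction attached to the eigenvalue near $-1$ to an approximate bipartition, then argue that expansion plus non-bipartiteness rules this out---has a genuine gap at Step 4, and the gap cannot be repaired without using the group structure, which your argument never invokes in an essential way. What Steps 2--3 deliver (this is essentially the dual Cheeger / bipartiteness-ratio inequality of Trevisan and of Bauer--Jost) is that the number of monochromatic edges and loops is at most $C d\sqrt{\varepsilon}\,|G|$ for some constant $C$. To conclude, as Step 4 requires, that there are \emph{no} such edges, you would need this count to be less than $1$, i.e.\ $\varepsilon \lesssim (d|G|)^{-2}$: the threshold degenerates with $|G|$ and cannot produce the $|G|$-independent bound $h(G)^4/(2^9d^8)$. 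This is not merely a lossy estimate; the statement ``every connected, non-bipartite, $d$-regular $\varepsilon$-expander has $2-\lambda_n$ bounded below in terms of $d$ and $\varepsilon$ alone'' is false. Take a $d$-regular bipartite expander on parts of size $n$ and perform a single $2$-switch (delete the edges $x_1y_1$ and $x_2y_2$, insert $x_1x_2$ and $y_1y_2$): the result is $d$-regular, connected, non-bipartite, and still an expander for $d$ moderately large, yet testing against the old bipartition eigenvector gives $2-\lambda_n \leq 4/(dn) \to 0$. Hence any correct proof must exploit that the graph is a Cayley sum graph, not just its regularity, connectedness and expansion.

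The paper supplies exactly the missing rigidity, and by a different (combinatorial rather than spectral-rounding) route. Assuming an eigenvalue of $T$ in $(-1,-1+\zeta]$, it passes to $T^2$, realises $T^2$ as the normalised adjacency operator of a multi-graph on $G$ with edges $g\sim sgt$ for $(s,t)\in S\times S$, and applies the discrete Cheeger--Buser inequality there to produce a set $A$ with $|A|\leq |G|/2$ and $|SAS\setminus A|/|A|$ small (Proposition \ref{Prop:AExists}). A Fre\u{\i}man-type dichotomy (Proposition \ref{Prop:Dichotomy}) then shows that for every $g\in G$ the quantity $|A\cap Ag|$ is either almost $0$ or almost $|A|$; consequently the set $H_+$ of those $g$ with $|A\cap Ag|$ large is an honest subgroup of index two, and a companion argument with $|A\cap A^{-1}g|$ forces $H_+$ to avoid $S$, contradicting non-bipartiteness exactly via Lemma \ref{Lemma:Bipartite}. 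It is this upgrade from an approximate bipartition to an exact index-two subgroup---impossible for general graphs, as the $2$-switch example shows---that your proposal is missing. (Your closing remark about the bound $1-h(G)^2/(2d^2)$ concerns the second largest eigenvalue $t_2$ and is not part of the statement being proved.)
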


This result is deduced after the proof of Theorem \ref{thmPrincipal}. As a corollary of the above theorem it follows that 

\begin{corollary}
\label{Corollary}
Let $d\geqslant 2$ be an integer. Let $\lbrace C_{\Sigma}(G_{k}, S_{k})\rbrace_{k\geqslant 1}$ be a sequence of non-bipartite, finite Cayley sum graphs with $|G_{k}| \rightarrow \infty, |S_{k}| = d $. Then, if there exists an uniform $\varepsilon >0$, such that each graph $C_{\Sigma}(G_{k}, S_{k})$ in the sequence is an $(|G_{k}|, d, \varepsilon)$-expander, we have all the eigenvalues of the normalised adjacency matrix of each graph are uniformly bounded away from $-1$.
\end{corollary}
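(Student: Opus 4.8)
The plan is to derive the corollary directly from Theorem~\ref{mainthm}; the only thing to verify is that the explicit bound obtained there is uniform across the sequence. First I would translate the expander hypothesis into a statement about Cheeger constants: by Definition~\ref{vexp}, saying that $C_{\Sigma}(G_{k}, S_{k})$ is an $(|G_{k}|, d, \varepsilon)$-expander is exactly the assertion that its vertex Cheeger constant satisfies $h(G_{k}) \geqslant \varepsilon$, for every $k \geqslant 1$.

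Next, since each $C_{\Sigma}(G_{k}, S_{k})$ is non-bipartite and is an expander with $|S_{k}| = d$, Theorem~\ref{mainthm} applies to every term of the sequence, giving
\[
t_{n}^{(k)} \;>\; -1 + \frac{h(G_{k})^{4}}{2^{9} d^{8}} \;\geqslant\; -1 + \frac{\varepsilon^{4}}{2^{9} d^{8}},
\]
where $t_{n}^{(k)}$ denotes the smallest eigenvalue of the normalised adjacency matrix of $C_{\Sigma}(G_{k}, S_{k})$, and the last inequality uses $h(G_{k}) \geqslant \varepsilon$. Because every eigenvalue of the normalised adjacency matrix lies in $[t_{n}^{(k)}, 1]$, all eigenvalues of all the graphs in the sequence are contained in the interval $\left[-1 + \frac{\varepsilon^{4}}{2^{9} d^{8}},\, 1\right]$.

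Finally I would note that the left endpoint of this interval depends only on $\varepsilon$ and $d$, both of which are fixed, and not on $k$; hence the eigenvalues are uniformly bounded away from $-1$, which is the claim. There is no genuine obstacle here — the substantive content is entirely in Theorem~\ref{mainthm} — and the hypothesis $|G_{k}| \to \infty$ plays no role in the argument beyond guaranteeing that the sequence is infinite so that the uniform statement is not vacuous.
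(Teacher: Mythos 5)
Your proposal is correct and follows essentially the same route as the paper: apply Theorem~\ref{mainthm} to each graph in the sequence and observe that, since $h(G_{k})\geqslant \varepsilon$, the resulting lower bound $-1+\frac{\varepsilon^{4}}{2^{9}d^{8}}$ depends only on $\varepsilon$ and $d$ and not on $k$. Your additional remarks (translating the expander condition into $h(G_{k})\geqslant\varepsilon$ and noting that $|G_{k}|\to\infty$ is not actually used) are accurate refinements of the paper's terser argument.
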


As a by-product of our proof we improve the bound established for Cayley graphs in \cite[Theorem 1.4]{BiswasCheegerCayley}. See Theorem \ref{Bis19}. Further, we prove sharper estimates for both Cayley sum graphs and Cayley graphs under the assumption that no proper symmetric subset of $S$ generates $G$. See Section \ref{Sec:SharperEstimates}, Theorem \ref{Thm:Sharp}.

\subsection{Outline of the proof}
We outline the proof of Theorem \ref{mainthm}. To prove this result, we assume on the contrary that the normalised adjacency matrix $T$ of the Cayley sum graph admits an eigenvalue close to $-1$ (see Theorem \ref{thmPrincipal}). This implies that $T^2$ has an eigenvalue close to $1$. We define a multi-graph $\calM(G, S\times S)$ such that its normalised adjacency matrix is equal to $T^2$ (see the proof of Proposition \ref{Prop:AExists}). Then the discrete Cheeger--Buser inequality yields an upper bound on the edge-Cheeger constant of $\calM(G, S\times S)$, which in turn implies an upper bound on the vertex-Cheeger constant of $\calM(G, S\times S)$. This yields a subset $A$ of $G$ of size $\leq \frac{|G|}{2}$ having a convenient upper bound on $|SAS\setminus A|/|A|$. Using combinatorial arguments, we obtain upper bounds on the sizes of several subsets defined using $A$ (see Proposition \ref{Prop:AExists}). As a consequence, for a given element $g\in G$, we establish a dichotomy result on the size of $A\cap Ag$ (see Proposition \ref{Prop:Dichotomy}), which states that the size $A\cap Ag$ is either very small or quite large as compared to the size of $A$. This allows us to adapt an argument due to Fre\u{\i}man \cite{FreimanGroupsInverseProb} in our set-up to construct a subgroup $H_+$ of $G$ (see Theorem \ref{thmPrincipal}). From the bound on the smallest eigenvalue of $T$, it follows that the subgroup $H_+$ has index two in $G$. In Proposition \ref{Prop:Dichotomy}, we also establish a similar dichotomy result on the size of $A\cap A^\mo g$. Using the strategy of Fre\u{\i}man once again, we define a subset $H_-$ of $G$, which avoids $S$ and is equal to a coset of $H_+$ in $G$, i.e., to $H_+$ or $G\setminus H_+$. To conclude the result, we consider two cases. First, if $H_-$ is equal to $H_+$, then the index two subgroup $H_+$ avoids $S$, which contradicts the hypothesis that $C_\Sigma(G, S)$ is non-bipartite (by Lemma \ref{Lemma:Bipartite}). Next, if $H_-$ is equal to $G\setminus H_+$, then the index two subgroup $H_+$ contains $S$, which contradicts the hypothesis that $S$ generates $G$.

\subsection{Acknowledgements}
We wish to thank Emmanuel Breuillard for a number of helpful discussions during the opening colloquium of the M\"unster Mathematics Cluster. The first author would like to acknowledge the support of the OWLF program and would also like to thank the Fakult\"at f\"ur Mathematik, Universit\"at Wien where he was supported by the European Research Council (ERC) grant of Goulnara Arzhantseva, ``ANALYTIC" grant agreement no. 259527. The second author would like to acknowledge the Initiation Grant from the Indian Institute of Science Education and Research Bhopal and the INSPIRE Faculty Award from the Department of Science and Technology, Government of India. He would also like to thank the MFO for their hospitality.

\section{Proof of the main result}
The degree of a vertex of a multi-graph is the number of half-edges adjacent to it (in the absence of loops). The presence of a loop at a vertex increases its degree by one. A multi-graph is said to be $r$-regular if each vertex has degree $r$. Apart from the vertex expansion as in Definition \ref{vexp}, we also have the notion of edge expansion.
\begin{definition}[Edge expansion]
Let $\mathbb{G} = (V,E)$ be a $d$-regular multi-graph with vertex set $V$ and edge multi-set $E$. For a subset $\emptyset \neq V_{1}\subseteq V$, let $E(V_{1},V\backslash V_{1})$ be the edge boundary of $V_{1}$, defined as
$$E(V_{1},V\backslash V_{1}) := \lbrace (v_{1},v_2)\in E: v_{1}\in V, v_2\in V\backslash V_{1} \rbrace .$$
Then the edge expansion ratio $\phi(V_{1})$ of $V_1$ is defined as 
$$\phi(V_{1}) := \frac{|E(V_{1},V\backslash V_{1})|}{d|V_{1}|}.$$
\end{definition}
\begin{definition}[Edge-Cheeger constant]
The edge-Cheeger constant $\mathfrak{h}(\mathbb{G})$ of a multi-graph $\mathbb G$ is defined by 
$$\mathfrak{h}(\mathbb{G}):= \inf_{\emptyset \neq V_{1}\subseteq V, |V_{1}|\leqslant |V|/2} \phi(V_{1}).$$
\end{definition}

In a $d$-regular multi-graph the two Cheeger constants are related by the following -
\begin{lemma}
\label{Lemma:VertexEdgeCons}
Let $\mathbb{G} = (V,E)$ be a $d$-regular multi-graph
$$ \frac{h(\mathbb{G})}{d} \leqslant \mathfrak{h}(\mathbb{G}) \leqslant h(\mathbb{G}).$$
\end{lemma}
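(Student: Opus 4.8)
The plan is to establish the two inequalities separately, in each case by transferring an extremal set for one Cheeger constant into a competitor for the other. Fix a nonempty $V_1\subseteq V$ with $|V_1|\leqslant |V|/2$. The elementary observation underlying both bounds is that every edge counted in $E(V_1,V\setminus V_1)$ has an endpoint in $\delta(V_1)=N(V_1)\setminus V_1$, and conversely every vertex of $\delta(V_1)$ is the endpoint of at least one such edge; moreover, since $\mathbb G$ is $d$-regular, a single vertex of $\delta(V_1)$ can absorb at most $d$ edges of the edge boundary. Thus
\begin{align*}
|\delta(V_1)|\;\leqslant\;|E(V_1,V\setminus V_1)|\;\leqslant\;d\,|\delta(V_1)|.
\end{align*}
Dividing through by $d\,|V_1|$ gives $\dfrac{|\delta(V_1)|}{d\,|V_1|}\leqslant \phi(V_1)\leqslant \dfrac{|\delta(V_1)|}{|V_1|}$.

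For the upper bound $\mathfrak h(\mathbb G)\leqslant h(\mathbb G)$, I would take an arbitrary admissible $V_1$ and use $\phi(V_1)\leqslant |\delta(V_1)|/|V_1|$; taking the infimum over all such $V_1$ on both sides yields $\mathfrak h(\mathbb G)\leqslant h(\mathbb G)$ directly, since the two infima range over the same family of sets. For the lower bound $h(\mathbb G)/d\leqslant \mathfrak h(\mathbb G)$, I would instead use $|\delta(V_1)|/(d\,|V_1|)\leqslant \phi(V_1)$; since $|\delta(V_1)|/|V_1|\geqslant h(\mathbb G)$ for every admissible $V_1$ by definition of the vertex Cheeger constant, we get $h(\mathbb G)/d\leqslant \phi(V_1)$ for every admissible $V_1$, and taking the infimum over $V_1$ gives $h(\mathbb G)/d\leqslant \mathfrak h(\mathbb G)$.

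The only point that needs a little care — and which I expect to be the one genuine obstacle — is the first inequality $|\delta(V_1)|\leqslant |E(V_1,V\setminus V_1)|$ in the multi-graph setting with loops: one must check that a vertex $v\in\delta(V_1)$ is joined to $V_1$ by a genuine (non-loop) edge, which holds because $v\notin V_1$, so any edge realising $v\in N(V_1)$ has its other endpoint in $V_1$ and is not a loop; choosing one such edge for each $v\in\delta(V_1)$ gives an injection $\delta(V_1)\hookrightarrow E(V_1,V\setminus V_1)$. The right-hand inequality $|E(V_1,V\setminus V_1)|\leqslant d|\delta(V_1)|$ uses $d$-regularity: every edge of the edge boundary has an endpoint in $\delta(V_1)$, and each vertex of $\delta(V_1)$ has total degree $d$, hence lies on at most $d$ edges of $E$ and a fortiori at most $d$ edges of $E(V_1,V\setminus V_1)$. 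With these two combinatorial facts in hand, the passage to infima is immediate and completes the proof.
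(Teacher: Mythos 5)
Your proof is correct and follows essentially the same route as the paper: the paper considers the map $\psi\colon E(V_1,V\setminus V_1)\to\delta(V_1)$, $(v_1,v_2)\mapsto v_2$, noting it is surjective (giving $|\delta(V_1)|\leqslant|E(V_1,V\setminus V_1)|$ and hence the left inequality) and at worst $d$-to-$1$ by $d$-regularity (giving the right inequality), which is exactly your two-sided bound $|\delta(V_1)|\leqslant|E(V_1,V\setminus V_1)|\leqslant d\,|\delta(V_1)|$ followed by passage to infima. Your treatment is somewhat more careful than the paper's one-line argument, in particular in checking that loops cause no trouble, but the underlying idea is identical.
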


\begin{proof}
Let $\emptyset \neq V_{1}\subseteq V$ and we consider the map $$\psi:E(V_{1},V\backslash V_{1}) \rightarrow \delta(V_{1}) \text{ given by }(v_{1},v_2)\mapsto v_{2}.$$ 
The map is surjective hence we have the left hand side and at the worst case $d$ to $1$ wherein we get the right hand side.
\end{proof}

The discrete Cheeger--Buser inequality relates the (edge) Cheeger constant with the second smallest eigenvalue of the Laplacian matrix. It is the version for graphs of the corresponding inequalities for the Laplace-Beltrami operator on compact Riemannian manifolds. It was first proven by Cheeger \cite{CheegerLowerBddSmallest} (lower bound) and by Buser \cite{BuserNoteIsoperimetric} (upper bound). The discrete version was shown by Alon and Millman \cite{AlonMilmanIsoperiIneqSupConcen} (Proposition \ref{Prop:chin}).
\begin{proposition}[Discrete Cheeger--Buser inequality]\label{Prop:chin}
Let $\mathbb{G} = (V,E)$ be a finite $d$-regular multi-graph. Let $\lambda_{2}$ denote the second smallest eigenvalue of its normalised Laplacian matrix and $\mathfrak{h}(\mathbb{G})$ be the (edge) Cheeger constant. Then 
$$ \frac{\mathfrak{h}(\mathbb{G})^{2}}{2} \leqslant \lambda_{2} \leqslant 2\mathfrak{h}(\mathbb{G}).$$
\end{proposition}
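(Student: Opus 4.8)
The plan is to prove the two inequalities separately: the upper bound $\lambda_2\leqslant 2\mathfrak h(\mathbb G)$ by testing the Rayleigh quotient against indicator functions (the ``Buser'' direction, which is elementary), and the lower bound $\tfrac12\mathfrak h(\mathbb G)^2\leqslant\lambda_2$ by a thresholding argument applied to a $\lambda_2$-eigenfunction (the ``Cheeger'' direction, which is the substantial one). Throughout I write $L=I_{|V|}-T$, use the Dirichlet-form identity $\langle Lf,f\rangle=\tfrac1d\sum_{\{u,v\}\in E}(f(u)-f(v))^2$ valid for $d$-regular $\mathbb G$ (loops contribute $0$ and multiplicities are carried through the sums), note that the constant vector is a $0$-eigenvector, and invoke the Courant--Fischer characterisation $\lambda_2=\min\{\langle Lf,f\rangle/\langle f,f\rangle : f\neq 0,\ \sum_v f(v)=0\}$. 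Since $\mathbb G$ is finite, the infimum defining $\mathfrak h(\mathbb G)$ is a minimum.

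For $\lambda_2\leqslant 2\mathfrak h(\mathbb G)$: let $\emptyset\neq V_1\subseteq V$ with $|V_1|\leqslant|V|/2$ attain $\mathfrak h(\mathbb G)$, and plug $f=\mathbf 1_{V_1}-\tfrac{|V_1|}{|V|}\mathbf 1$ (which has mean $0$) into the Rayleigh quotient. As $L$ annihilates constants, $\langle Lf,f\rangle=\langle L\mathbf 1_{V_1},\mathbf 1_{V_1}\rangle=\tfrac1d|E(V_1,V\setminus V_1)|$, while $\langle f,f\rangle=|V_1|(1-\tfrac{|V_1|}{|V|})\geqslant\tfrac12|V_1|$, so $\lambda_2\leqslant\frac{|E(V_1,V\setminus V_1)|/d}{|V_1|/2}=2\phi(V_1)=2\mathfrak h(\mathbb G)$.

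For $\tfrac12\mathfrak h(\mathbb G)^2\leqslant\lambda_2$: take an eigenfunction $g$ for $\lambda_2$; since $\sum_v g(v)=0$ it changes sign, so one of $\max(g,0)$ and $\max(-g,0)$ is supported on $\leqslant|V|/2$ vertices --- after possibly replacing $g$ by $-g$, call it $\psi\not\equiv 0$. Two steps follow. \emph{(i) Truncation:} the pointwise inequality $(a-b)(a^+-b^+)\geqslant(a^+-b^+)^2$ for reals $a,b$ (with $x^+=\max(x,0)$), applied edge by edge, gives $\langle L\psi,\psi\rangle\leqslant\langle Lg,\psi\rangle=\lambda_2\langle g,\psi\rangle=\lambda_2\langle\psi,\psi\rangle$, the last equality because $\psi$ and $g-\psi$ have disjoint supports; hence the Rayleigh quotient of $\psi$ is at most $\lambda_2$. \emph{(ii) Co-area:} ordering the vertices so that $\psi$ is non-increasing and writing $S_k$ for the set of the first $k$ of them, an Abel-summation comparison of $\sum_{\{u,v\}\in E}|\psi(u)^2-\psi(v)^2|$ with $\sum_v\psi(v)^2$ produces some $k$ with $S_k$ contained in the support of $\psi$ (so $|S_k|\leqslant|V|/2$) and $\phi(S_k)\leqslant\frac{\sum_{\{u,v\}\in E}|\psi(u)^2-\psi(v)^2|}{d\sum_v\psi(v)^2}$. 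Then factoring $|\psi(u)^2-\psi(v)^2|=|\psi(u)-\psi(v)|\,|\psi(u)+\psi(v)|$, applying Cauchy--Schwarz, and using $\sum_{\{u,v\}}(\psi(u)-\psi(v))^2=d\langle L\psi,\psi\rangle\leqslant d\lambda_2\sum_v\psi(v)^2$ together with $\sum_{\{u,v\}}(\psi(u)+\psi(v))^2\leqslant 2\sum_{\{u,v\}}(\psi(u)^2+\psi(v)^2)=2d\sum_v\psi(v)^2$, one obtains $\mathfrak h(\mathbb G)\leqslant\phi(S_k)\leqslant\sqrt{2\lambda_2}$, i.e.\ $\tfrac12\mathfrak h(\mathbb G)^2\leqslant\lambda_2$.

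The Rayleigh-quotient computations and the Cauchy--Schwarz estimate are routine; the step I expect to be the main obstacle is the lower bound's reduction to a one-sided function supported on $\leqslant|V|/2$ vertices together with the truncation inequality of step (i) --- the edgewise case analysis and the disjoint-support bookkeeping --- and then getting the constants right in the co-area step (ii), in particular making sure that multiple edges and loops (which fall out of every squared-difference sum) do not disturb the accounting. One should also record that $\lambda_2>0$ in the non-trivial cases so that the manipulations above are not vacuous.
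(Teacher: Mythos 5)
Your proof is correct. Note, however, that the paper does not prove this proposition at all: its ``proof'' is a one-line citation to Lubotzky (Propositions 4.2.4 and 4.2.5) and to Friedland. What you have written out is precisely the standard derivation that those references contain: the Buser direction by testing the Rayleigh quotient on a mean-zero shift of the indicator of an optimal cut, and the Cheeger direction via the positive-part truncation inequality $(a-b)(a^{+}-b^{+})\geqslant(a^{+}-b^{+})^{2}$, the sweep/co-area decomposition along the level sets of $\psi^{2}$, and Cauchy--Schwarz. All the steps check out, including the disjoint-support identity $\langle Lg,\psi\rangle=\lambda_{2}\langle\psi,\psi\rangle$ and the estimate $\sum_{\{u,v\}}(\psi(u)+\psi(v))^{2}\leqslant 2d\sum_{v}\psi(v)^{2}$. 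The one point deserving a word of care --- which you flag yourself --- is the paper's convention that a loop adds only one to the degree: since loops contribute nothing to any squared-difference sum or to $|E(V_{1},V\setminus V_{1})|$, you may simply discard them from every edge sum, after which $\sum_{\{u,v\}}(\psi(u)^{2}+\psi(v)^{2})\leqslant d\sum_{v}\psi(v)^{2}$ holds and the constant in $\mathfrak{h}(\mathbb{G})\leqslant\sqrt{2\lambda_{2}}$ is unaffected. Finally, the remark about $\lambda_{2}>0$ is not really needed: if $\mathbb{G}$ is disconnected then $\lambda_{2}=0=\mathfrak{h}(\mathbb{G})$ and both inequalities are trivial, while the eigenfunction argument goes through verbatim in any case.
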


\begin{proof}
See \cite[Proposition 4.2.4, 4.2.5]{LubotzkyDiscreteGroups} or \cite[Section 1]{FriedlandLowerBdd}.

\end{proof}

\begin{lemma}
\label{Lemma:Bipartite}
The Cayley sum graph $C_\Sigma(G, S)$ is bipartite if and only if $G$ contains a subgroup of index two which does not intersect $S$. 
\end{lemma}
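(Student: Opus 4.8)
The plan is to recall the standard bipartiteness criterion in terms of the smallest Laplacian eigenvalue (or, equivalently, a proper $2$-colouring) and translate it into the group-theoretic language of the Cayley sum graph. First I would establish the easy direction: suppose $G$ contains a subgroup $H$ of index two with $H\cap S=\emptyset$. Write $G=H\sqcup gH$ for some $g\notin H$, and take the bipartition $V=H\sqcup (G\setminus H)$. I claim every edge of $C_\Sigma(G,S)$ joins $H$ to $G\setminus H$. Indeed, by definition $x$ is adjacent to $y$ iff $y=x^{\mo}s$ for some $s\in S$, i.e. $xy=s\in S$ (using that $x^{\mo}s=y\iff xy=$ something; more carefully $y = x^{\mo}s \iff xy = s$ when we are in the relevant setting, and since $S$ is conjugation-closed the relation $y=x^{\mo}s$ is symmetric in $x,y$). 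Since $H$ has index two, $xy\in H$ iff $x,y$ lie in the same coset of $H$; as $xy=s\in S$ and $S\cap H=\emptyset$, we get $xy\notin H$, so $x$ and $y$ lie in different cosets. Hence the graph is bipartite (and in particular this argument also shows there are no loops, consistent with $e\notin S$).

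For the converse, suppose $C_\Sigma(G,S)$ is bipartite, say with bipartition $V=V_1\sqcup V_2$, and no edge lies within $V_1$ or within $V_2$. Fix $x_0\in V_1$ and set $H:=x_0^{\mo}V_1$ and its complement $x_0^{\mo}V_2$; I would like to show $H$ is a subgroup of index two disjoint from $S$. The combinatorial input is: $x\sim y \iff xy\in S$, so within the same part of the bipartition no product $xy$ lies in $S$; across parts, $xy\in S$ can occur. Translating: for $a,b\in V_1$ we have $ab\notin S$, for $a,b\in V_2$ we have $ab\notin S$, and there exist $a\in V_1,b\in V_2$ with $ab\in S$. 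The cleanest route is to observe that the ordinary Cayley graph structure is hidden here: $x\sim_\Sigma y \iff x\sim_{C(G,S)} y^{\mo}$, i.e. the involution $y\mapsto y^{\mo}$ conjugates $C_\Sigma(G,S)$ into $C(G,S)$ up to relabelling — more precisely $C_\Sigma(G,S)$ is bipartite iff the Cayley graph $C(G,S)$ is bipartite, since $y=x^{\mo}s\iff y^{\mo}=s^{\mo}x$ and $S=S^{\mo}$. Then I invoke the classical fact that $C(G,S)$ is bipartite iff there is an index-two subgroup of $G$ avoiding $S$: given the bipartition of $C(G,S)$ through $e$, the part containing $e$ is closed under the generators-relation and turns out to be exactly the subgroup generated by $SS$ (products of an even number of elements of $S$), which has index two precisely because $S$ is nonempty and lies outside it.

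I expect the main obstacle to be bookkeeping the passage between the adjacency relation $h=g^{\mo}s$ of the Cayley \emph{sum} graph and a genuine subgroup condition, since unlike the Cayley graph the relation is not left-translation invariant but rather ``twisted'' by inversion; one must be careful that $S$ being conjugation-closed (hence the graph undirected, by Lemma~\ref{undirected-Cayley-sum}) is exactly what makes the relation $h=g^{\mo}s$ symmetric and what lets the inversion map identify the bipartition structure of $C_\Sigma(G,S)$ with that of $C(G,S)$. Once that identification is in place, the subgroup $H$ is forced to be the set of products of evenly many elements of $S$, and checking $H\cap S=\emptyset$ and $[G:H]=2$ is routine. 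An alternative self-contained argument avoids citing the Cayley-graph fact: take $H$ to be the set of all $x\in G$ such that $x$ is a product of an even number of elements of $S$ together with a parity-preserving adjustment coming from the bipartition, verify directly that $H$ is a subgroup using $S=S^{\mo}$, that $S\subseteq G\setminus H$, and that $|G\setminus H|=|H|$ because the graph is connected-free of monochromatic edges; I would present whichever of these is shorter in the final writeup.
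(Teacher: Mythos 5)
Your forward direction is correct and is essentially the paper's argument: with $H$ of index two and $H\cap S=\emptyset$, every edge $x\sim y$ gives $xy=s\in S\setminus H$, forcing $x$ and $y$ into different cosets of $H$. The problem is in your preferred route for the converse. The assertion that ``the involution $y\mapsto y^{-1}$ conjugates $C_\Sigma(G,S)$ into $C(G,S)$ up to relabelling'' is false as stated: the two graphs need not be isomorphic at all (for $G=\mathbb{Z}/n\mathbb{Z}$ with $n$ odd and $S=\{\pm 1\}$, the Cayley sum graph has loops while $C(G,S)$ is an $n$-cycle --- the paper points this out in its introduction). What your observation $y=x^{-1}s\iff y^{-1}=s^{-1}x$ actually yields is the edge correspondence $x\sim_\Sigma y\iff x\sim_{C} y^{-1}$, which inverts only one endpoint and hence is not induced by any vertex bijection. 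In particular it does not by itself transfer a $2$-colouring: to turn a bipartition of $C_\Sigma(G,S)$ into one of $C(G,S)$ you need the colour classes to be closed under inversion, and knowing that is essentially equivalent to knowing they are a subgroup and its complement --- which is what you are trying to prove. The gap is fixable (since $S$ is symmetric and conjugation-closed, $g\mapsto g^{-1}$ is an automorphism of $C_\Sigma(G,S)$ fixing $e$, hence preserves the unique bipartition of this connected graph, after which the transfer and the classical Cayley-graph fact finish the job), but that step is missing and is exactly where the content lies.

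Your fallback ``self-contained'' route is, modulo the vague phrase ``parity-preserving adjustment'', precisely what the paper does: take $A$ to be the part of the bipartition containing $e$; connectivity and $S=S^{-1}$ show that every element of $A$ is a product of an even number of elements of $S$ and conversely, so $A$ is closed under multiplication and is a subgroup; independence of $A$ gives $A\cap S=\emptyset$; and the injections $a\mapsto a^{-1}s$ between the two parts give $|A|=|B|$, i.e.\ index two. If you write up that version carefully you have the lemma; as submitted, your primary argument has a genuine hole.
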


\begin{proof}
Suppose $G$ contains a subgroup $H$ of index two which does not intersect $S$. Note that $H$ forms an independent subset of the set of vertices of the graph $C_\Sigma(G, S)$. Otherwise, for two adjacent elements $x,y\in H$ with $y = x^\mo s$ for some $s\in S$, we will obtain $s = xy \in H$, which contradicts $H\cap S= \emptyset$. We claim that $G\setminus H$ also forms an independent subset of the set of vertices of the graph $C_\Sigma(G, S)$. Otherwise, for two adjacent elements $x,y\in G \setminus H$ with $y = x^\mo s$ for some $s\in S$, we will obtain $s = xy$. Since $H$ has index two in $G$, it follows that the product of any two elements of $G$ lying outside $H$ lies in $H$. Thus we get $H\cap S\neq \emptyset$. Hence $G\setminus H$ is independent as claimed. So the Cayley sum graph $C_\Sigma(G, S)$ is bipartite.

Suppose the Cayley sum graph $C_\Sigma(G, S)$ is bipartite, i.e, its vertex set is the union of two disjoint partite sets $A, B$. Without loss of generality, suppose $A$ contains the identity element $e$ of $G$. Let $x, y$ be two elements of $A$. Since $C_\Sigma(G, S)$ is connected, the vertices $x, y$ are connected to $e$. Since $S$ is symetric, the elements $x, y$ are equal to products of even number of elements of $S$. So $xy$ is also equal to a product of even number of elements of $S$. Thus $xy\in A$, and hence $A$ is a subgroup of $G$. Since $A$ is independent, it does not intersect $S$. Let $s\in S$ be an element. Since $A$ is independent, the image of the map $A\to G$ defined by $a\mapsto a^\mo s$ does not intersect $A$, and hence $|A|\leq |B|$. Similarly, $|B|\leq |A|$. So $|A| = |B|$, and hence $A$ is a subgroup of $G$ of index two avoiding $S$. 
\end{proof}

\begin{lemma}\label{undirected-Cayley-sum}
The Cayley sum graph $C_\Sigma(G, S)$ is undirected if and only if $S$ is closed under conjugation.
\end{lemma}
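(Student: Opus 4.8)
The plan is to translate the defining adjacency relation of $C_\Sigma(G,S)$ into a purely group-theoretic condition and then exploit the fact that $hg$ is always a conjugate of $gh$. By definition, the vertex $h$ is adjacent to $g$ exactly when $h = g^{-1}s$ for some $s \in S$, i.e.\ exactly when $gh \in S$. Consequently $C_\Sigma(G,S)$ is undirected if and only if, for every pair $g,h \in G$, one has $gh \in S$ if and only if $hg \in S$. (Since the element $s = gh$ is determined by the pair $(g,h)$, no multiple edges arise, and the case $g = h$, which would produce a loop, is covered by the same condition.)

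Granting this reformulation, the ``if'' direction is immediate from the identity $hg = g^{-1}(gh)g$: if $S$ is closed under conjugation and $gh \in S$, then $hg = g^{-1}(gh)g \in S$, and the reverse implication holds by symmetry, so the adjacency relation is symmetric. For the ``only if'' direction, suppose $C_\Sigma(G,S)$ is undirected and fix arbitrary $s \in S$ and $t \in G$; the goal is to show $tst^{-1} \in S$. Put $g := t^{-1}$ and $h := ts$, so that $gh = t^{-1}(ts) = s \in S$; applying the symmetry of the adjacency relation gives $hg \in S$, and $hg = (ts)t^{-1} = tst^{-1}$, as desired. Hence $S$ is closed under conjugation.

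The argument is entirely formal, so there is no genuine obstacle; the only thing to be careful about is the bookkeeping --- matching the informal notion ``undirected'' with symmetry of the relation ``$h = g^{-1}s$ for some $s\in S$'', and keeping track of whether a given edge corresponds to the product $gh$ or $hg$. One could alternatively avoid introducing the shorthand $gh \in S$ and argue directly with the equation $h = g^{-1}s$, but threading the conjugation through is marginally cleaner in the reformulated language.
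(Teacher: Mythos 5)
Your proof is correct and follows essentially the same route as the paper: both translate adjacency into the condition $gh\in S$ and exploit that $hg$ is the conjugate $g^{-1}(gh)g$, so symmetry of the adjacency relation is exactly closure of $S$ under conjugation. The only cosmetic difference is that the paper phrases the computation directly in terms of $hsh^{-1}=g^{-1}sg$ for an adjacent pair rather than via your explicit $gh\in S \Leftrightarrow hg\in S$ reformulation.
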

\begin{proof}
 Note that if $h$ is adjacent to $g$, then $g = sh^\mo = h^\mo (hsh^\mo)$, which implies that $g$ is adjacent to $h$ if and only if $hsh^\mo\in S$, i.e., $g$ is adjacent to each of its adjacent vertices if and only if $(g^\mo s) s (g^\mo s)^\mo = g^\mo sg\in S$. Hence $C_\Sigma(G, S)$ is undirected if and only if $S$ closed under conjugation.	
\end{proof}

\begin{lemma}
\label{Lemma:VertexExpnRatioBdd}
Suppose $C_\Sigma(G, S)$ is an $\varepsilon$-vertex expander for some $\varepsilon>0$, i.e., 
$$|A^\mo S\setminus A| \geq \varepsilon |A|$$
for every subset $A\subseteq G$ with $|A| \leq \frac 12 |G|$. Then for any subset $A$ of $G$ with $|A|\geq \frac 12 |G|$, the inequality 
$$|A^\mo S\setminus A | \geq \frac \varepsilon d |G\setminus A|$$
holds.
\end{lemma}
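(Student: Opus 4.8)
The plan is to pass to the complement. Write $B := G\setminus A$; since $|A|\geq\frac12|G|$ we have $|B|\leq\frac12|G|$, so the vertex-expansion hypothesis applies to $B$. If $B=\emptyset$ the asserted inequality is vacuous, so assume $B\neq\emptyset$.

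First I would record the combinatorial meaning of the relevant sets. In $C_\Sigma(G,S)$ the neighbourhood of a vertex $g$ is $N(\{g\}) = g^\mo S$, so for any $X\subseteq G$ one has $N(X) = X^\mo S$ and $\delta(X) = X^\mo S\setminus X$. Because $A$ and $B$ partition $G$, this gives $\delta(A) = (A^\mo S)\cap B$ and $\delta(B) = (B^\mo S)\cap A$. Applying the vertex-expansion hypothesis to $B$ yields
$$|\delta(B)| = |B^\mo S\setminus B|\geq \varepsilon|B| = \varepsilon|G\setminus A|.$$

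The heart of the argument is to show $|\delta(B)|\leq d\,|\delta(A)|$. I would construct a map $\varphi\colon\delta(B)\to\delta(A)$ that is at most $d$-to-one. Given $y\in\delta(B)\subseteq A$, the vertex $y$ has, by definition of $\delta(B)$, at least one neighbour $b_y$ lying in $B$; set $\varphi(y) := b_y$. Since $b_y$ is adjacent to $y\in A$ and $b_y\in B$, we get $b_y\in N(A)\cap B = \delta(A)$, so $\varphi$ is well defined. If $\varphi(y)=\varphi(y')=b$, then $y$ and $y'$ are both neighbours of $b$, hence both lie in $N(\{b\}) = b^\mo S$, a set of at most $d$ elements; thus every fibre of $\varphi$ has size at most $d$. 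Therefore $|\delta(B)|\leq d\,|\delta(A)|$, and combining this with the previous display gives
$$|A^\mo S\setminus A| = |\delta(A)|\geq \frac1d|\delta(B)|\geq \frac{\varepsilon}{d}|G\setminus A|,$$
as required.

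I do not anticipate a real obstacle; the only points needing a little care are (i) verifying that the neighbourhood of a vertex of $C_\Sigma(G,S)$ is genuinely $g^\mo S$, so that $\delta(A)$ coincides with the set $A^\mo S\setminus A$ appearing both in the statement and in the expansion hypothesis, and (ii) disposing of the degenerate case $A=G$ separately. An essentially equivalent route replaces the map $\varphi$ by an edge count in the $d$-regular graph: each vertex of $\delta(B)$ emits at least one edge into $B$, every such edge in fact terminates in $\delta(A)$, and each vertex is incident to at most $d$ edges, which again forces $|\delta(B)|\leq d\,|\delta(A)|$.
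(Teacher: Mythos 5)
Your proof is correct and follows essentially the same route as the paper's: apply the expansion hypothesis to the complement of $A$ (which has size at most $\frac12|G|$) and then absorb a factor of $d$ by comparing the boundary of the complement with $|A^{-1}S\setminus A|$ via a degree-counting argument. The paper phrases this comparison as a set-algebraic decomposition of $A^cS\setminus (A^c)^{-1}$ into at most $d$ translates, each of cardinality at most $|A^{-1}S\setminus A|$, whereas you package it as an at-most-$d$-to-one map between the two vertex boundaries; the content is the same.
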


\begin{proof}
The claimed inequality follows from 
\begin{align*}
|A^c S \setminus (A^c)^\mo |
& = |\cup_{s\in S} (A^c s \setminus (A^c)^\mo )|\\
& = |\cup_{s\in S} (A^c s \cap A^\mo) |\\
& = |\cup_{s\in S} (A^c  \cap A^\mo s^\mo) |\\
& = |\cup_{s\in S} (A^\mo s^\mo \setminus A) |\\
& \leq \sum_{s\in S}| A^\mo s^\mo \setminus A|\\
& = \sum_{s\in S}| A^\mo s^\mo \setminus A |\\
& \leq \sum_{s\in S}| A^\mo S \setminus A |\\
& = d| A^\mo S \setminus A |
\end{align*}
and 
\begin{align*}
|A^c S \setminus (A^c)^\mo |
& \geq \varepsilon |(A^c)^\mo |\\
& = \varepsilon |A^c|\\
& = \varepsilon |G\setminus A|.
\end{align*}
\end{proof}

\begin{proposition}
\label{Prop:AExists}
Let $C_\Sigma(G, S)$ be an $\varepsilon$-vertex expander for some $\varepsilon>0$. Suppose the normalised adjacency matrix of $C_\Sigma(G, S)$ has an eigenvalue in the interval $(-1, -1+\zeta]$ for some $\zeta$ satisfying $0<\zeta \leq \frac{\varepsilon^2}{4d^4}$. Then for some subset $A$ of $G$, the following conditions hold with $\beta = d^2 \sqrt{2\zeta (2-\zeta)}$.
\begin{enumerate}
\item $\left(\frac 1{2 + \beta + \frac{d\beta}{\varepsilon}}\right) |G| \leq |A| \leq \frac 12 |G|$.
\item $|Ag\cap (Ag)^\mo S| \leq  \frac \beta\varepsilon |A|$ for all $g\in G$.
\item $|(Ag)^\mo s \Delta (Ag)^c | \leq  \frac \beta \varepsilon (\varepsilon + d+ 2) |A|$ for all $s\in S, g\in G$. 
\item $|A^\mo g\cap (A^\mo g)^\mo S| \leq  \frac \beta\varepsilon |A|$ for all $g\in G$.
\item $|(A^\mo g)^\mo s \Delta (A^\mo g)^c | \leq  \frac \beta \varepsilon (\varepsilon + d+ 2) |A|$ for all $s\in S, g\in G$. 
\end{enumerate}
\end{proposition}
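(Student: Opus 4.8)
The plan is to convert the spectral hypothesis into a combinatorial expansion statement for an auxiliary multi-graph, and then extract the set $A$ together with all the stated size bounds by chasing neighbourhoods through that construction. First I would build the multi-graph $\calM(G, S\times S)$ whose vertex set is $G$ and whose edges record, for each ordered pair $(s,s')\in S\times S$, an edge from $g$ to $s'^\mo(g^\mo s)^\mo = s'^\mo g s^\mo$; the point (to be checked by a direct computation, using that $C_\Sigma(G,S)$ is undirected, i.e.\ $S$ is conjugation-closed) is that the normalised adjacency matrix of $\calM(G, S\times S)$ is exactly $T^2$, where $T$ is the normalised adjacency matrix of $C_\Sigma(G,S)$. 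Since $T$ has an eigenvalue in $(-1,-1+\zeta]$, the matrix $T^2$ has an eigenvalue in $[(1-\zeta)^2, 1) = [1 - 2\zeta + \zeta^2, 1)$, so the normalised Laplacian of $\calM(G, S\times S)$ has second-smallest eigenvalue $\lambda_2 \leq 2\zeta - \zeta^2 = \zeta(2-\zeta)$ (note this is the eigenvalue at $1-(1-\zeta)^2$; one must check $\calM$ is connected, or otherwise handle the bipartite-type degeneration, but non-bipartiteness of $C_\Sigma(G,S)$ is exactly what rules out $T$ having eigenvalue $-1$ with the wrong multiplicity — here we only need the strict inequality $t_n>-1$ fails, giving an eigenvalue strictly inside, so $T^2$'s eigenvalue is strictly below $1$).

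Next I would feed this into the Cheeger--Buser inequality (Proposition \ref{Prop:chin}) applied to the $d^2$-regular multi-graph $\calM(G, S\times S)$: from $\frac{\frakh(\calM)^2}{2} \leq \lambda_2 \leq \zeta(2-\zeta)$ we get $\frakh(\calM) \leq \sqrt{2\zeta(2-\zeta)}$, and then Lemma \ref{Lemma:VertexEdgeCons} gives $h(\calM) \leq d^2 \frakh(\calM) \leq d^2\sqrt{2\zeta(2-\zeta)} = \beta$. By definition of the vertex-Cheeger constant there is a subset $A \subseteq G$ with $0 < |A| \leq \frac12|G|$ and $|\delta_{\calM}(A)| \leq \beta|A|$; unwinding the neighbourhood in $\calM$, $\delta_\calM(A) = SAS \setminus A$ (or its two-sided variant — one should be careful that neighbours in $\calM$ are of the form $s'^\mo a s^\mo$ ranging over $a\in A$, $s,s'\in S$, so that $N_\calM(A) = S^\mo A S^\mo = SAS$ since $S$ is symmetric), hence $|SAS \setminus A| \leq \beta|A|$. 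This is the key set $A$; item (1)'s upper bound is immediate, and the lower bound on $|A|$ will come from combining $|SAS\setminus A| \leq \beta|A|$ with the vertex-expansion hypothesis in the form of Lemma \ref{Lemma:VertexExpnRatioBdd} applied to $A$ and to intermediate sets like $SA$ — the chain $|G| = |A| + |A^c| \leq |A| + \frac{d}{\varepsilon}|A^\mo S \setminus A| \leq \cdots$, estimating $|A^\mo S\setminus A|$ and $|SA\setminus A|$ by pieces of $|SAS\setminus A|$, should yield $|G| \leq (2 + \beta + \frac{d\beta}{\varepsilon})|A|$.

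For items (2)--(5) I would argue as follows. Items (2),(3) concern right translates $Ag$ and items (4),(5) concern the "inverse-twisted" translates $A^\mo g$; the two families are handled identically once one observes that the bound $|SAS\setminus A|\leq\beta|A|$ is stable under the relevant operations. For (2): $|Ag \cap (Ag)^\mo S| = |Ag \cap g^\mo A^\mo S|$, and left/right translating and using that $SAS\setminus A$ is small forces $Ag$ and a set of the form $(\text{something})^\mo S$ to overlap only in a set controlled by $\frac{\beta}{\varepsilon}|A|$ — concretely I expect one writes the intersection's size as at most $|A^\mo S \setminus A|$ up to translation, then invokes $|A^\mo S\setminus A|\leq\frac{d}{\varepsilon}\cdot(\text{expansion bound})$ or directly bounds it by a slice of $|SAS\setminus A|$ and the expander inequality, landing on $\frac{\beta}{\varepsilon}|A|$. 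For (3): $(Ag)^\mo s \Delta (Ag)^c$ is a symmetric difference, so split it as $((Ag)^\mo s \setminus (Ag)^c) \cup ((Ag)^c \setminus (Ag)^\mo s)$; the first piece is $(Ag)^\mo s \cap Ag$, already controlled by (2)-type reasoning (size $\leq \frac\beta\varepsilon|A|$ roughly), and the second is $(Ag)^c \setminus (Ag)^\mo s \subseteq$ (complement minus a neighbourhood), whose size is bounded by $|\delta((Ag)^\mo)| + (\text{slack})$; assembling the constants — one $\varepsilon$, one $d$, one $2$ from the two set-difference directions and the regularity — gives the stated $\frac\beta\varepsilon(\varepsilon+d+2)|A|$. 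Items (4),(5) repeat this with $A^\mo$ in place of $A$, using that $|SA^\mo S \setminus A^\mo| = |S^\mo A S^\mo \setminus A^\mo|^{(\text{inv})} = |SAS\setminus A|$ since inversion is a graph automorphism swapping the two sides, so the same $\beta|A|$ bound is in force.

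I expect the main obstacle to be the bookkeeping of constants in items (3) and (5): getting precisely the factor $(\varepsilon + d + 2)$ rather than something slightly worse requires being economical about which expander inequality (the direct one for small sets versus the Lemma \ref{Lemma:VertexExpnRatioBdd} version for large sets) is applied at each step, and about not double-counting the overlap $Ag \cap (Ag)^\mo s$ when it appears both in the symmetric-difference decomposition and in the complement estimate. A secondary subtlety is verifying carefully that the normalised adjacency matrix of $\calM(G,S\times S)$ is *exactly* $T^2$ including the diagonal (loop) contributions — the loops of $C_\Sigma(G,S)$ at elements $g$ with $g^2 \in S$, and the loops created in $\calM$ whenever $s' g s = g$ for some $s,s'$, must be matched against the diagonal of $T^2$ — but this is a finite check once the edge convention is fixed.
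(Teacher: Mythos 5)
Your first half --- squaring the adjacency operator, realising $T^2$ as the normalised adjacency matrix of the multi-graph $\calM(G, S\times S)$, applying Cheeger--Buser and Lemma \ref{Lemma:VertexEdgeCons} to extract a set $A$ with $|A|\leq \frac12|G|$ and $|SAS\setminus A|\leq \beta|A|$ --- is exactly the paper's route and is sound (modulo a harmless algebra slip in your edge formula: the two-step neighbour of $g$ is $(g^\mo s)^\mo s' = s^\mo g s'$, not $s'^\mo g s^\mo$, though both parametrise $SgS$). The genuine gaps are in the combinatorial second half. First, for the lower bound in (1): your chain begins with $|A^c|\leq \frac d\varepsilon |A^\mo S\setminus A|$, which is Lemma \ref{Lemma:VertexExpnRatioBdd} applied to $A$ itself --- but that lemma requires $|A|\geq\frac12|G|$, and $A$ satisfies the opposite inequality. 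The paper instead first proves the intermediate claim $|A\cup A^\mo S|\geq \frac12|G|$ (by showing that otherwise the expansion of $A\cup A^\mo S$ would force $\varepsilon < 2d^2\sqrt\zeta$), and only then applies the lemma to the large set $A\cup A^\mo S$. This intermediate claim is precisely where the hypothesis $\zeta\leq\frac{\varepsilon^2}{4d^4}$ enters the proof; your sketch never locates a use of that hypothesis, which signals that this step is missing rather than merely compressed.

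Second, for item (2) your description (``write the intersection's size as at most $|A^\mo S\setminus A|$ up to translation'') does not work: there is no control on $|A^\mo S\setminus A|$ by itself --- only $|SAS\setminus A|$ is known to be small, and $A^\mo S\setminus A$ is not a translate of a subset of $SAS\setminus A$ in any evident way. The idea you are missing is to apply the $\varepsilon$-expansion hypothesis \emph{to the intersection set itself}: since $|Ag\cap (Ag)^\mo S|\leq |A|\leq\frac12|G|$, one has $\varepsilon|Ag\cap(Ag)^\mo S|\leq |(Ag\cap(Ag)^\mo S)^\mo S\setminus (Ag\cap(Ag)^\mo S)|$, and the right-hand set is contained in $SAgS\setminus Ag$, whose cardinality equals $|SAS\setminus A|\leq\beta|A|$ by right-translation invariance. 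Items (3) and (5) are essentially fine once (1), (2) and (4) are in place: your symmetric-difference decomposition collapses to the exact identity $|(Ag)^\mo s\Delta(Ag)^c| = |G|-2|A|+2|Ag\cap(Ag)^\mo s|$, after which the stated constant $(\varepsilon+d+2)$ drops out of (1) and (2) with no further bookkeeping. Your diagonal/loop worry about $T^2$ is legitimate but resolved by the edge convention the paper fixes (distinct pairs $(s,t)$ give distinct edges even when $sgt=s'gt'$), which makes the matrix identity an immediate count.
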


\begin{proof}
Since $G$ is not bipartite, by Lemma \ref{Lemma:Bipartite}, it follows that $|G| \geq 3$. 
Let $s$ be an element of $S$. If $G$ has order $3$, then $S = \{s, s^\mo\}$ and $s$ is of order $3$, and hence 
$$\varepsilon = \varepsilon|\{s\}| 
\leq |\{s\}^\mo S\setminus \{s\}|
= |\{1, s^{-2}\}\setminus \{s\}|
= |\{1, s\}\setminus \{s\}|
= 1 = d-1.$$
When $|G|\geq 4$, we have 
$$\varepsilon|\{1, s\}| \leq |\{1, s\}^\mo S\setminus \{1, s\}|
= |(S\cup s^\mo S) \setminus \{1, s\}|
\leq |(S\setminus \{s \})\cup (s^\mo S \setminus \{1\})| 
\leq 2(d-1),$$
which implies 
\begin{equation}
\label{Eqn:EpsilonBound}
\varepsilon \leq d-1.
\end{equation}
Consequently, it follows that $\zeta <1$. Let $T$ denote the normalised adjacency matrix of the Cayley sum graph $C_\Sigma(G, S)$. Since $T$ has an eigenvalue in $(-1, -1+\zeta]$ and $\zeta <1$, it follows that $T^2$ has an eigenvalue $\nu$ in $[(1-\zeta)^2, 1)$. 

Consider the undirected multi-graph $\calM(G, S\times S)$ (which may contain multiple edges, also and multiple loops at a single vertex) with $G$ as its set of vertices and its edges are obtained by drawing an edge from $g$ to $sgt$ for each $(s,t)\in S\times S$. Since $S$ is symmetric, this multi-graph is indeed undirected (since $g = s^\mo (sgt)t^\mo$ for any $(s, t)\in S\times S$ and for any $g\in G$). For two distinct elements $(s, t), (s', t')\in S\times S$, the edges from $g$ to $sgt$ and $s'gt'$ are considered distinct (even when $sgt= s'gt'$). Note that the normalised adjacency matrix of $\calM(G, S\times S)$ is equal to $T^2$. Thus the second largest eigenvalue of the normalised adjacency matrix of $\calM(G, S\times S)$ is $\geq \nu\geq (1-\zeta)^2 = 1 - \zeta(2-\zeta)$. Hence the second smallest eigenvalue of the normalised Laplacian matrix of $\calM(G, S\times S)$ is $\leq \zeta(2-\zeta)$. By the discrete Cheeger--Buser inequality (Proposition \ref{Prop:chin}), it follows that the edge-Cheeger constant of $\calM(G, S\times S)$ satisfies 
$$\frac 12 \frakh(\calM(G, S\times S))^2 \leq \zeta(2-\zeta),$$
which yields 
$$\frakh(\calM(G, S\times S))\leq  \sqrt{2\zeta(2-\zeta)}.$$
Consequently, by Lemma \ref{Lemma:VertexEdgeCons}, the vertex-Cheeger constant of $\calM(G, S\times S)$ satisfies
$$h(\calM(G, S\times S)) \leq 
d^2 \frakh(\calM(G, S\times S)) \leq d^2\sqrt{2\zeta(2-\zeta)}.$$
This implies that for some subset $A$ of $G$ with $|A|\leq \frac 12 |G|$, 
\begin{equation}
\label{Eqn:SetA}
\frac{|SAS\setminus A|}{|A|} \leq d^2\sqrt{2\zeta(2-\zeta)}
\end{equation}
holds (since the size of the set $SAS\setminus A$ is no larger than the size of the boundary of the subset $A$ of the set of vertices of $\calM(G, S\times S)$). 

We claim that 
\begin{equation}
\label{Eqn:LowerBdd}
|A\cup A^\mo S | \geq \frac 12 |G|.
\end{equation}
Otherwise, the inequality $|A\cup A^\mo S | \leq \frac 12 |G|$ would imply 
$$\varepsilon|A \cup A^\mo S| \leq |  ((A \cup A^\mo S)^\mo S) \setminus (A \cup A^\mo S)|,$$
which combined with the inequalities 
$$\varepsilon |A| \leq \varepsilon|A \cup A^\mo S |$$
and 
$$|  ((A \cup A^\mo S)^\mo S) \setminus (A \cup A^\mo S)|
= |  (A^\mo S \cup SAS) \setminus (A \cup A^\mo S)|
= |SAS \setminus S|
\leq |A|  d^2\sqrt{2\zeta(2-\zeta)}
$$
implies 
$$\varepsilon \leq d^2\sqrt{2\zeta(2-\zeta)} < d^2 \sqrt{4\zeta}.$$
This contradicts the assumption $\zeta \leq \frac{\varepsilon^2}{4d^4}$. Hence Equation \eqref{Eqn:LowerBdd} holds. 

Applying Lemma \ref{Lemma:VertexExpnRatioBdd} to the Cayley sum graph $C_\Sigma(G, S)$, we obtain 
$$
\frac \varepsilon d |G \setminus (A\cup A^\mo S)| \leq 
|((A\cup A^\mo S)^\mo S) \setminus (A\cup A^\mo S)| \leq |A|  d^2\sqrt{2\zeta(2-\zeta)} = |A| \beta.
$$
So 
$$
\frac {d\beta}\varepsilon |A| \geq 
|G \setminus (A\cup A^\mo S)| = |G| - |A\cup A^\mo S| 
$$
which implies 
\begin{align*}
|G| 
& \leq \frac {d\beta}\varepsilon |A| + |A\cup A^\mo S|  \\
&\leq \frac {d\beta}\varepsilon |A| + |A| + |A^\mo S|\\
& = \frac {d\beta}\varepsilon |A| + |A| + |SA|\\
&\leq \frac {d\beta}\varepsilon |A| + |A| + |S A S|\\
&\leq \frac {d\beta}\varepsilon |A| + |A| + |A| + |S A S\setminus A |  \\
&\leq \frac {d\beta}\varepsilon |A| + 2|A| + \beta |A |  ,
\end{align*}
where the last inequality follows from Equation \eqref{Eqn:SetA}.
This proves the inequalities as in statement (1). 

To obtain the inequality in statement (2), note that $|A|\leq \frac 12 |G|$ implies that $|Ag\cap (Ag)^\mo S| \leq \frac 12 |G|$. Since $C_\Sigma(G, S)$ is an $\varepsilon$-vertex expander, it follows that 
\begin{align*}
\varepsilon | Ag\cap (Ag)^\mo S| 
& \leq |((Ag\cap (Ag)^\mo S)^\mo S) \setminus (Ag\cap (Ag)^\mo S)|\\
& = |(((Ag)^\mo \cap S Ag S) \setminus (Ag\cap (Ag)^\mo S)|\\
& \leq |((Ag)^\mo S \cap S Ag S)  \setminus (Ag\cap (Ag)^\mo S)|\\
& \leq |S Ag S\setminus Ag| \\
& = |S Ag Sg^\mo \setminus A| \\
& = |S AS \setminus A| \\
& \leq \beta |A|.
\end{align*}
This establishes the inequality in statement (2). 

To obtain the inequality in statement (3), it suffices to observe that
\begin{align*}
|(Ag)^\mo s\Delta (Ag)^c| 
& = |(Ag)^\mo s|  + |(Ag)^c| - 2 | (Ag)^\mo s \cap (Ag)^c|\\
& = |Ag| +  |G| - |Ag| - 2(|(Ag)^\mo s| - |(Ag)^\mo s \cap Ag|)\\
& = |G| - 2|(Ag)^\mo s| + 2|(Ag)^\mo s\cap Ag| \\
& = |G| - 2|A| + 2|Ag \cap (Ag)^\mo s| \\
& \leq |G| - 2|A| + 2|Ag \cap (Ag)^\mo S| \\
& \leq \left(2 + \beta + \frac {d\beta}\varepsilon\right) |A| - 2| A| + \frac {\beta}\varepsilon |A|\\
& = \beta \left( 1 + \frac d\varepsilon + \frac 2 \varepsilon\right) |A|
\end{align*}
holds, where the strict inequality is obtained by applying statement (1) and (2). 

To obtain the inequality in statement (4), note that $|A^\mo |\leq \frac 12 |G|$ implies that $|A^\mo g\cap (A^\mo g)^\mo S| \leq \frac 12 |G|$. Since $C_\Sigma(G, S)$ is an $\varepsilon$-vertex expander, it follows that 
\begin{align*}
\varepsilon | A^\mo g\cap ((A^\mo g)^\mo S)| 
& \leq |((A^\mo g\cap ((A^\mo g)^\mo S))^\mo S) \setminus (A^\mo g\cap ((A^\mo g)^\mo S))|\\
& = |(((A^\mo g)^\mo \cap (S A^\mo g S)) \setminus (A^\mo g\cap ((A^\mo g)^\mo S))|\\
& \leq |(((A^\mo g)^\mo S) \cap (S A^\mo g S))  \setminus (A^\mo g\cap ((A^\mo g)^\mo S))|\\
& \leq |S A^\mo g S\setminus A^\mo g| \\
& = |S A^\mo g Sg^\mo \setminus A^\mo | \\
& = |S A^\mo S \setminus A^\mo | \\
& = |S A S \setminus A | \\
& \leq \beta |A |.
\end{align*}
This establishes the inequality in statement (4). 

To complete the proof, it suffices to observe that
\begin{align*}
|(A^\mo g)^\mo s\Delta (A^\mo g)^c| 
& = |(A^\mo g)^\mo s|  + |(A^\mo g)^c| - 2 | (A^\mo g)^\mo s \cap (A^\mo g)^c|\\
& = |A^\mo g| +  |G| - |A^\mo g| - 2(|(A^\mo g)^\mo s| - |(A^\mo g)^\mo s \cap A^\mo g|)\\
& = |G| - 2|(A^\mo g)^\mo s| + 2|(A^\mo g)^\mo s\cap A^\mo g| \\
& = |G| - 2|A | + 2|A^\mo g \cap (A^\mo g)^\mo s| \\
& \leq |G| - 2|A| + 2|A^\mo g \cap ((A^\mo g)^\mo S)| \\
& \leq \left(2 + \beta + \frac {d\beta}\varepsilon\right) |A| - 2|A| + \frac {\beta}\varepsilon |A|\\
& = \beta \left( 1 + \frac d\varepsilon + \frac 2 \varepsilon\right) |A|
\end{align*}
holds, where the strict inequality is obtained by applying statement (1) and (4). 
\end{proof}

\begin{proposition}
\label{Prop:Dichotomy}
Under the notations and assumptions as in Proposition \ref{Prop:AExists}, and the additional hypothesis 
$$\beta < \frac{\varepsilon^2}{4 d(d+1)},$$
it follows that for a given element $g\in G$, 

\begin{enumerate}
\item exactly one of the inequalities  
$$|A \cap Ag| \leq \frac{d \beta}{\varepsilon^2} (\varepsilon + d + 2)|A|, 
\quad 
|A \cap Ag| \geq \left( 1 - \frac {d \beta}{ \varepsilon^2}  ( \varepsilon + d + 2) \right) |A|$$
holds, 
\item exactly one of the inequalities  
$$|A \cap A^\mo g| \leq \frac{d \beta}{\varepsilon^2} (\varepsilon + d + 2)|A|, 
\quad 
|A \cap A^\mo g| \geq \left( 1 - \frac {d \beta}{ \varepsilon^2}  ( \varepsilon + d + 2) \right) |A|$$
holds. 
\end{enumerate}
\end{proposition}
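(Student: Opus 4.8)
Write $\theta:=\frac{d\beta}{\varepsilon^{2}}(\varepsilon+d+2)$. The two intervals $[0,\theta|A|]$ and $[(1-\theta)|A|,\infty)$ are disjoint precisely when $\theta<\tfrac12$. The extra hypothesis $\beta<\frac{\varepsilon^{2}}{4d(d+1)}$ gives $\theta<\frac{\varepsilon+d+2}{4(d+1)}$, and the bound $\varepsilon\le d-1$ from \eqref{Eqn:EpsilonBound} yields $\varepsilon+d+2\le 2d+1<2(d+1)$, whence $\theta<\tfrac12$. So the plan is to simply record this numerical inequality at the start; it immediately rules out both inequalities in (1) (resp.\ (2)) holding simultaneously for $|A\cap Ag|$ (resp.\ $|A\cap A^{\mo}g|$).

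\textbf{Part II: at least one holds.} Fix $g\in G$ and argue by contradiction, assuming $\theta|A|<|A\cap Ag|<(1-\theta)|A|$. Then both $A\cap Ag$ and $A\setminus Ag=A\cap(Ag)^{c}$ are nonempty of size $\le|A|\le\tfrac12|G|$, so the $\varepsilon$‑vertex‑expansion of $C_{\Sigma}(G,S)$ applies to each; using (1) one also checks $|(A\cup Ag)^{c}|=|G|-2|A|+|A\cap Ag|<\tfrac12|G|$, so expansion applies there too. The structural inputs are (2) and (3) of Proposition~\ref{Prop:AExists}: (2) says every right translate $Ag'$ is almost independent in $C_{\Sigma}(G,S)$ (few vertices of $Ag'$ have a neighbour in $Ag'$), and (3), which gives $|(Ag')^{\mo}s\,\triangle\,(Ag')^{c}|\le\frac{\beta}{\varepsilon}(\varepsilon+d+2)|A|$ for all $s\in S,g'\in G$, combines with (2) — the latter controlling the part of the neighbourhood that lands inside $Ag'$ — to yield $|(Ag')^{\mo}S\,\triangle\,(Ag')^{c}|\le\kappa|A|$ with $\kappa=\frac{\beta}{\varepsilon}(\varepsilon+d+3)$. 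Since $(A\cap Ag)^{\mo}S\subseteq A^{\mo}S\cap(Ag)^{\mo}S$, applying the last estimate with $g'=e$ and $g'=g$ shows $N(A\cap Ag)=(A\cap Ag)^{\mo}S$ lies within $2\kappa|A|$ of $A^{c}\cap(Ag)^{c}=(A\cup Ag)^{c}$, of size $|G|-2|A|+|A\cap Ag|$.

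Feeding these bounds into the vertex‑expansion inequality for $A\cap Ag$, and symmetrically for $A\setminus Ag$ (where one uses $(A\setminus Ag)^{\mo}=A^{\mo}\cap((Ag)^{\mo})^{c}$ together with the fact that $((Ag)^{\mo})^{c}S$ approximates $Ag$, again by (3)), together with the estimate $|G|-2|A|\le\frac{\beta}{\varepsilon}(\varepsilon+d)|A|$ supplied by (1), produces a linear inequality in $|A\cap Ag|$ whose only solutions in the range $(\theta|A|,(1-\theta)|A|)$ are excluded once $\beta<\frac{\varepsilon^{2}}{4d(d+1)}$; this is the desired contradiction. (In this step the ``large'' alternative is reached from the inequality for $A\setminus Ag$, whose boundary is small exactly when $A\cap Ag$ is almost all of $A$.) The dichotomy in (2) is obtained identically, replacing $A$ by $A^{\mo}$ everywhere and using (4) and (5) in place of (2) and (3), noting $|A^{\mo}|=|A|\le\tfrac12|G|$.

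\textbf{Main obstacle.} The delicate point is the constant‑tracking in Part II: one must consistently use the \emph{sharpest} forms of the Proposition~\ref{Prop:AExists} estimates — in particular the almost‑independence (2), which upgrades the crude bound $|(Ag')^{\mo}S|\le\sum_{s\in S}|(Ag')^{\mo}s|$ to one of size comparable to $|(Ag')^{c}|$, and statement (1), used in place of a direct neighbourhood count to bound $|G|-2|A|$ — so that the accumulated error collapses to exactly $\theta=\frac{d\beta}{\varepsilon^{2}}(\varepsilon+d+2)$ rather than to a larger multiple of $\frac{\beta}{\varepsilon}(\varepsilon+d+2)$. This bookkeeping, rather than any new idea, is where the bulk of the work lies; conceptually it is a finitary, graph‑theoretic rendering of Fre\u\i man's observation that a translate either barely moves a small‑doubling set or moves it essentially off itself.
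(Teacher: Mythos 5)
Your Part~I is fine and agrees with the paper's opening step. Part~II, however, contains a genuine gap, and it is not a matter of constant-tracking. The sets you propose to expand, $A\cap Ag$ and $A\setminus Ag$, are the wrong ones: by your own containments, $N(A\cap Ag)=(A\cap Ag)^{\mo}S$ is approximately contained in $(A\cup Ag)^{c}$, whose size is $|G|-2|A|+|A\cap Ag|$, and $N(A\setminus Ag)$ is approximately contained in $Ag\setminus A$, whose size equals $|A\setminus Ag|$. In both cases the approximate neighbourhood contains a ``self-term'' of the same magnitude as the set being expanded, so after substituting the bound $|G|-2|A|\leq(\beta+\frac{d\beta}{\varepsilon})|A|$ from Proposition~\ref{Prop:AExists}(1), the vertex-expansion inequality $\varepsilon|X|\leq|X^{\mo}S\setminus X|$ only yields estimates of the shape $(\varepsilon-1)\,|A\cap Ag|\leq O(d^{2}\beta/\varepsilon)\,|A|$ and $(\varepsilon-1)\,|A\setminus Ag|\leq O(d^{2}\beta/\varepsilon)\,|A|$. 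These are vacuous whenever $\varepsilon\leq 1$, and the hypotheses of Proposition~\ref{Prop:AExists} in fact force $\varepsilon\leq 1+O(\beta)$ (apply expansion to $A$ itself and note that $A^{\mo}S$ essentially fills $A^{c}$, which has size roughly $|A|$), so the factor $\varepsilon-1$ cannot be bounded below by anything useful. Worse, both purported inequalities would hold simultaneously and unconditionally, so no case analysis on them can single out one of the two alternatives; the advertised ``linear inequality in $|A\cap Ag|$'' is never written down and cannot be produced along this route.

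The missing idea is to apply expansion to the symmetric differences $B_{+}:=A\Delta(Ag)^{c}$ and $B_{+}^{c}=A\Delta Ag$ rather than to the intersection and the difference. The identity $(X\Delta Y)^{\mo}s=(X^{\mo}s)\Delta(Y^{\mo}s)$ together with Proposition~\ref{Prop:AExists}(3) (applied with $g'=e$ and $g'=g$) shows that $B_{+}^{\mo}S\Delta B_{+}$ and $(B_{+}^{c})^{\mo}S\Delta B_{+}^{c}$ each have size at most $\frac{2d\beta}{\varepsilon}(\varepsilon+d+2)|A|$: these two sets are \emph{almost invariant} under the neighbourhood operation, so their boundaries are genuinely $O(\beta)|A|$ with no loss of a factor $\varepsilon-1$. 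Expansion applied to whichever of $B_{+}$, $B_{+}^{c}$ has size at most $\frac{|G|}{2}$ then forces that one to have size at most $\frac{2d\beta}{\varepsilon^{2}}(\varepsilon+d+2)|A|$, and the identities $|B_{+}^{c}|=2|A|-2|A\cap Ag|$ and $|B_{+}|=|G|-2|A|+2|A\cap Ag|\geq 2|A\cap Ag|$ translate the two cases into precisely the two alternatives of the dichotomy. Part (2) follows in the same way from $B_{-}:=A\Delta(A^{\mo}g)^{c}$ and Proposition~\ref{Prop:AExists}(5).
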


\begin{proof}
Note that the inequalities 
$$
\frac {2d\beta} { \varepsilon^2} ( \varepsilon + d + 2) 
\leq \frac {2d\beta} { \varepsilon^2}( d + d + 2) 
= \frac {4d\beta} { \varepsilon^2} ( d + 1) 
<1 
$$
imply that 
$$\frac{d \beta}{\varepsilon^2} (\varepsilon + d + 2)
< 
1 - \frac {d \beta}{ \varepsilon^2}  ( \varepsilon + d + 2).$$
Hence it suffices to show that for a given element $g\in G$, 
one of the inequalities
$$|A \cap Ag| \leq \frac{d \beta}{\varepsilon^2} (\varepsilon + d + 2)|A|, 
\quad 
|A \cap Ag| \geq \left( 1 - \frac {d \beta}{ \varepsilon^2}  ( \varepsilon + d + 2) \right) |A|$$
holds, and 
one of the inequalities
$$|A \cap A^\mo g| \leq \frac{d \beta}{\varepsilon^2} (\varepsilon + d + 2)|A|, 
\quad 
|A \cap A^\mo g| \geq \left( 1 - \frac {d \beta}{ \varepsilon^2}  ( \varepsilon + d + 2) \right) |A|$$
holds.

Define the subset $B_+$ of $G$ by $B_+:=A \Delta (A g)^c$. The set $B_+^c$ is also equal to $(A \Delta (A g)^c)^c = A \Delta Ag$. 
Note that 
\begin{align*}
|B_+^\mo S \Delta B_+ |
& \leq \sum_{s\in S} |B_+^\mo s \Delta B_+| \\
& = \sum_{s\in S} | ((A \Delta (A g)^c)^\mo s) \Delta (A \Delta (A g)^c) | \\
& = \sum_{s\in S} | (A^\mo s \Delta ((A g)^c)^\mo s) \Delta (A \Delta (A g)^c) | \\
& = \sum_{s\in S} | (A^\mo s \Delta ((A g)^c)^\mo s) \Delta (A^c \Delta A g) | \\
& = \sum_{s\in S} | (A^\mo s \Delta A^c) \Delta \left(((A g)^c)^\mo s \Delta A g\right) | \\
& = \sum_{s\in S} | (A^\mo s \Delta A^c) \Delta \left(((A g)^\mo)^c s \Delta A g\right) | \\
& = \sum_{s\in S} | (A^\mo s \Delta A^c) \Delta \left((A g)^\mo s \Delta (A g)^c \right) | \\
& \leq \sum_{s\in S} \left(| A^\mo s \Delta A^c| + | (A g)^\mo s \Delta (A g)^c| \right) \\
& \leq \frac {2d\beta} \varepsilon (\varepsilon + d+ 2) |A|,
\end{align*}
and 
\begin{align*}
|(B_+^c)^\mo S \Delta B_+^c |
& \leq \sum_{s\in S} |(B_+^c)^\mo s \Delta B_+^c| \\
& = \sum_{s\in S} | ((A \Delta A g)^\mo s) \Delta (A \Delta A g) | \\
& = \sum_{s\in S} | (A^\mo s \Delta (A g)^\mo s) \Delta (A^c \Delta (A g)^c) | \\
& = \sum_{s\in S} | (A^\mo s \Delta A^c) \Delta ((A g)^\mo s \Delta (A g)^c) | \\
& \leq \sum_{s\in S} \left( | A^\mo s \Delta A^c| + |(A g)^\mo s \Delta (A g)^c |\right) \\
& \leq \frac{2d\beta}{\varepsilon}(\varepsilon + d + 2) |A|
\end{align*}
hold as a consequence of Proposition \ref{Prop:AExists}(3). 
We consider the following cases, viz., $|B_+|\leq \frac{|G|}2, |B_+| > \frac{|G|}2$. 
When $|B_+|\leq \frac{|G|}2$ holds, we obtain 
$$
\varepsilon |B_+| \leq |B_+^\mo S \setminus B_+| \leq |B_+^\mo S \Delta B_+|  \leq \frac{2d \beta}\varepsilon (\varepsilon + d + 2)|A|,
$$
which yields 
$$|B_+| \leq \frac {2d\beta}{ \varepsilon^2} ( \varepsilon + d + 2) |A|.
$$
Since 
$$
|G| - |B_+| = |B_+^c| 
= |A \Delta A g| 
= |A| - |A\cap Ag| + |Ag| - |A\cap Ag| 
= 2|A| - 2|A\cap Ag| 
$$
holds, 
we obtain 
$$
2|A \cap A g |
\leq |G| - 2|A| + 2|A \cap A g |
= |B_+| 
\leq \frac{2d \beta}{\varepsilon^2} (\varepsilon + d + 2)|A|.
$$
While $|B_+| > \frac{|G|}2$ holds, we obtain 
$$
\varepsilon |B_+^c| \leq |(B_+^c)^\mo S \setminus B_+^c| \leq |(B_+^c)^\mo S \Delta B_+^c|  
\leq \frac{2d\beta}{\varepsilon}(\varepsilon + d + 2) |A|,
$$
which yields 
$$|B_+^c| \leq \frac{2d\beta}{\varepsilon^2}(\varepsilon + d + 2) |A|.
$$
Since 
$$
|B_+^c| 
= |A \Delta A g| 
= |A| - |A\cap Ag|  + |Ag | - |A\cap Ag| 
= 2|A| - 2|A\cap Ag| $$
holds, 
we obtain 
$$
|A \cap A g |
\geq |A| - \frac {d\beta}{ \varepsilon^2} ( \varepsilon + d + 2)|A|
=\left( 1- \frac {d\beta}{ \varepsilon^2} ( \varepsilon + d + 2)\right)|A|.
$$
Considering the subset $B_-$ of $G$ defined by $B_- : = A \Delta (A^\mo g)^c$, and using Proposition \ref{Prop:AExists}(5) and similar arguments as above, we obtain that 
$$
|A \cap A^\mo g |
\leq \frac{d \beta}{\varepsilon^2} (\varepsilon + d + 2)|A|.
$$
or 
$$
|A \cap A^\mo g |
\geq \left( 1- \frac {d\beta}{ \varepsilon^2} ( \varepsilon + d + 2)\right)|A|.
$$
holds according as $|B_-| \leq \frac{|G|}2$ or $|B_-| > \frac{|G|}2$. 
\end{proof}

\begin{theorem}\label{thmPrincipal}
Suppose $C_\Sigma(G, S)$ is an $\varepsilon$-vertex expander for some $\varepsilon>0$. Assume that this graph is not bipartite. Then the eigenvalues of the normalised adjacency matrix of this graph are greater than $-1 + \ell_{\varepsilon, d}$ with 
$$\ell_{\varepsilon, d}
=\frac{\varepsilon^4}{2^9 d^8}.
$$
\end{theorem}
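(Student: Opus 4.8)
The plan is to argue by contradiction, so suppose the normalised adjacency matrix $T$ of $C_\Sigma(G,S)$ has an eigenvalue in $(-1,-1+\zeta]$ with $\zeta:=\ell_{\varepsilon,d}=\varepsilon^4/(2^9 d^8)$. The preliminary step is bookkeeping: using $\varepsilon\le d-1$ (Equation~\eqref{Eqn:EpsilonBound}, which in particular forces $d\ge 2$), one verifies $\zeta\le\varepsilon^2/(4d^4)$, so Proposition~\ref{Prop:AExists} applies and produces a set $A$ together with the parameter $\beta=d^2\sqrt{2\zeta(2-\zeta)}$; one then checks $\beta<\varepsilon^2/(4d(d+1))$, so Proposition~\ref{Prop:Dichotomy} applies as well. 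Writing $\gamma:=\frac{d\beta}{\varepsilon^2}(\varepsilon+d+2)$ for the quantity controlling the dichotomy, the point of the numerical value of $\zeta$ is that it makes $\gamma$ smaller than a fixed absolute constant (say $\gamma<1/4$), which is all the later steps will use about it; note also that $|A|/|G|$ is then safely larger than $\gamma$.

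Next I would reconstruct a subgroup in the spirit of Fre\u{\i}man. Put $H_+:=\{g\in G:\ |A\cap Ag|\ge(1-\gamma)|A|\}$; by Proposition~\ref{Prop:Dichotomy}(1), every $g$ either lies in $H_+$ or has $|A\cap Ag|\le\gamma|A|$. Since $|A\Delta Ag^{\mo}|=|A\Delta Ag|$ we get $H_+=H_+^{\mo}$; clearly $e\in H_+$; and for $g_1,g_2\in H_+$ the triangle inequality $|A\Delta Ag_1g_2|\le|A\Delta Ag_1|+|Ag_1\Delta Ag_1g_2|=|A\Delta Ag_1|+|A\Delta Ag_2|\le 4\gamma|A|$ gives $|A\cap Ag_1g_2|\ge(1-2\gamma)|A|>\gamma|A|$, so $g_1g_2\in H_+$. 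Hence $H_+$ is a subgroup. Feeding Proposition~\ref{Prop:AExists}(3) through the same manipulations (essentially the estimate of $|B_+^{\mo}S\Delta B_+|$ in the proof of Proposition~\ref{Prop:Dichotomy}, specialised to $g\in H_+$) shows in addition that $A$ is approximately invariant under left and right multiplication by $H_+$, i.e.\ $|hA\Delta A|$ and $|Ah\Delta A|$ are $O(\gamma|A|)$ for $h\in H_+$; consequently $A$ lies within symmetric difference $O(\gamma|A|)$ of a union of cosets of $H_+$, which together with $|A|\le\tfrac12|G|$ and $|A|$ being close to $\tfrac12|G|$ forces $[G:H_+]\ge 2$.

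The place where the hypothesis on the \emph{smallest} eigenvalue of $T$ (not merely on $T^2$) enters, and the step I expect to be the main obstacle, is pinning the index down to exactly $2$. Let $f$ be an eigenvector with $Tf=tf$, $t\in(-1,-1+\zeta]$; then $\sum_g f(g)=0$. Since $C_\Sigma(G,S)$ is an $\varepsilon$-expander, the discrete Cheeger inequality bounds its second-largest normalised adjacency eigenvalue by $1-\varepsilon^2/(2d^2)$, which — by the choice of $\zeta$ — is strictly below $|t|$; comparing this with the Rayleigh-quotient bound $\langle T|f|,|f|\rangle\ge|t|\,\||f|\|^2$ (triangle inequality applied rowwise) forces $|f|$ to be within a tiny relative $\ell^2$-error of a constant vector. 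Thus $f$ is essentially $\pm c$-valued, its two level sets $X$ and $X^c$ each have size $\approx\tfrac12|G|$, the identity $\tfrac1d\sum_{s\in S}f(g^{\mo}s)=tf(g)\approx -f(g)$ shows that almost every edge of $C_\Sigma(G,S)$ runs between $X$ and $X^c$, and the set $A$ — which is essentially an optimal threshold set of $f$, the $t^2$-eigenspace of $T^2$ coinciding with the $t$-eigenspace of $T$ because $|t|$ exceeds every non-trivial eigenvalue of $T$ — satisfies $A\approx X$ up to $O(\gamma|A|)$. Combining $X\approx$ a union of $H_+$-cosets with this almost-bipartiteness, the bipartition descends to the coset space of $H_+$; one then either reads off an index-two subgroup of $G$ avoiding $S$ directly (already contradicting Lemma~\ref{Lemma:Bipartite}), or concludes $[G:H_+]=2$.

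Finally, the Fre\u{\i}man coset construction. Set $H_-:=\{g\in G:\ |A\cap A^{\mo}g|\ge(1-\gamma)|A|\}$, with the companion dichotomy of Proposition~\ref{Prop:Dichotomy}(2). From $\sum_g|A\cap A^{\mo}g|=|A|^2$ and this dichotomy, a counting argument yields $|H_-|\ge(|A|-\gamma|G|)/(1-\gamma)>0$, so $H_-\ne\emptyset$; the translation identities above then show $H_-$ is a single coset of $H_+$ — that is, $H_-=H_+$ or $H_-=G\setminus H_+$ — and that $H_-$ avoids $S$: Proposition~\ref{Prop:AExists}(5) with $g=e$ gives $|As\Delta(A^{\mo})^c|\le\frac\beta\varepsilon(\varepsilon+d+2)|A|\le\gamma|A|$, whence, for every $s\in S$, $|A\cap A^{\mo}s|\le\gamma|A|$ and so $s\notin H_-$. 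To conclude: if $H_-=H_+$, then the index-two subgroup $H_+$ avoids $S$, so $C_\Sigma(G,S)$ is bipartite by Lemma~\ref{Lemma:Bipartite}, contradicting the hypothesis; if $H_-=G\setminus H_+$, then $S\subseteq H_+$, a proper subgroup, contradicting that $S$ generates $G$. In either case we reach a contradiction, and therefore $t_n>-1+\ell_{\varepsilon,d}$.
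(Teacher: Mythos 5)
Your overall architecture matches the paper's: argue by contradiction, invoke Propositions \ref{Prop:AExists} and \ref{Prop:Dichotomy}, build $H_+$ and $H_-$ by Fre\u{\i}man's trick, and close with the dichotomy $H_-=H_+$ (bipartite, contradicting Lemma \ref{Lemma:Bipartite}) versus $H_-=G\setminus H_+$ (then $S\subseteq H_+$, contradicting generation). The $H_-$ counting and the endgame are essentially the paper's. However, your step pinning $[G:H_+]$ to exactly $2$ is a genuine gap. First, the set $A$ of Proposition \ref{Prop:AExists} is produced only as a witness to the small vertex-Cheeger constant of the multi-graph $\calM(G,S\times S)$; it has no stated relation to the eigenvector $f$ of $T$, so the assertion ``$A\approx X$ up to $O(\gamma|A|)$'' is unjustified (the near-constancy of $|f|$ is fine, but it transfers no information to $A$). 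Second, even granting $A\approx X$ and that $A$ is close to a union of $H_+$-cosets of total size $\approx\tfrac12|G|$, this is consistent with $[G:H_+]=k$ for any even $k$ (take $k/2$ cosets), and since $H_+$ need not be normal the phrase ``the bipartition descends to the coset space'' has no meaning; so the index-two conclusion does not follow. The paper's route is purely combinatorial and you already use it for $H_-$: from $|A|^2=\sum_{g\in G}|A\cap Ag|\leq |H_+||A|+\gamma|A|(|G|-|H_+|)$ and the lower bound on $|A|$ in Proposition \ref{Prop:AExists}(1) one gets $|H_+|>\tfrac13|G|$, while $H_+\neq G$ follows from $\sum_{g}|A\cap Ag|=|A|^2\leq\tfrac12|A||G|$; Lagrange then forces index exactly $2$. (Relatedly, the sign of the offending eigenvalue is not needed at this stage; after passing to $T^2$ the paper only uses non-bipartiteness in the final contradiction.)

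A second, fixable, error: in proving that $H_+$ is closed under products you use $|Ag_1\Delta Ag_1g_2|=|A\Delta Ag_2|$, which fails for non-abelian $G$ --- right-translating by $g_1^{\mo}$ shows $|Ag_1\Delta Ag_1g_2|=|A\Delta A(g_1g_2g_1^{\mo})|$, a conjugate. Use the intermediate set $Ag_2$ instead, as the paper does: $|A\setminus Ag_1g_2|\leq|A\setminus Ag_2|+|Ag_2\setminus (Ag_1)g_2|=|A\setminus Ag_2|+|A\setminus Ag_1|$. With these two repairs (and the same care in showing $H_-H_+\subseteq H_-$), your argument becomes the paper's.
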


\begin{proof}
On the contrary, let us assume that an eigenvalue of the normalised adjacency matrix of the graph $C_\Sigma(G, S)$ lies in the interval $\left[-1, -1 +  \ell_{\varepsilon, d}\right]$. Since $G$ does not contain an index two subgroup by Lemma \ref{Lemma:Bipartite}, it follows that $C_\Sigma(G, S)$ is non-bipartite, and hence $-1$ is not an eigenvalue of its normalised adjacency matrix. Hence an eigenvalue of the normalised adjacency matrix of the graph $C_\Sigma(G, S)$ lies in the interval $(-1, -1+ \ell_{\varepsilon, d}]$. Set 
\begin{align*}
\tau & = d^2 \sqrt{2 \ell_{\varepsilon, d}(2- \ell_{\varepsilon, d})},\\
r & =  1- \frac {d \tau}{ \varepsilon^2}  ( \varepsilon + d + 2).
\end{align*}
Since $\ell_{\varepsilon, d} = \frac{\varepsilon^4}{2^9 d^8}$, we have 
$$\tau =  d^2 \sqrt{2 \ell_{\varepsilon, d}(2- \ell_{\varepsilon, d})} <  d^2 \sqrt{4 \ell_{\varepsilon, d}} \leqslant \frac{\varepsilon^{2}}{8\sqrt{2}d^{2}}.$$
$$
1 - r = \frac {d \tau}{ \varepsilon^2}  ( \varepsilon + d + 2)
< \frac 1{8\sqrt 2 d}  ( \varepsilon + d + 2)
\leq \frac 1{8\sqrt 2 d}  ( d-1 + d + 2) 
\leq \frac {3}  {8\sqrt 2} < \frac 13.$$
Consequently, 
\begin{equation}
\label{Eqn:Bounds}
\ell_{\varepsilon, d}\leq \frac{\varepsilon^2}{4d^4}, 
\tau < \frac{\varepsilon^2}{4d(d+1)} \text{ and }  r> \frac 23.
\end{equation}
 
Define the subsets $H_+, H_-$ of $G$ by 
\begin{align*}
H_+ & :=
\{
g\in G \,:\,\, |A \cap A g | \geq r|A|
\}, \\
H_- & :=
\{
g\in G \,:\,\, |A \cap A^\mo g | \geq r|A|
\}.
\end{align*}
Note that $H_+$ contains the identity element of $G$. 
By the triangle inequality, 
\begin{align*}
|A \setminus A gh| 
& \leq |A \setminus A h | + |A h \setminus A gh|\\
& =  |A \setminus A h | + |A  \setminus A g|\\
& = |A | -  |A \cap A h | + |A | - |A  \cap A g|\\
& \leq 2|A| - 2r |A| .
\end{align*}
Consequently, 
$$
|A \cap A gh| 
= |A | - |A \setminus Agh| 
\geq |A | - 2|A| + 2r |A| 
= (2r - 1) |A|. 
$$
If $|A \cap A gh	| \leq (1-r) |A|$, then we obtain 
$$(1-r) |A|
\geq |A \cap A gh| 
\geq  (2r - 1) |A|,$$
which implies $r\leq \frac 23$. Since $r>\frac 23$, by Proposition \ref{Prop:Dichotomy}(1), it follows that $H_+$ contains $gh$. So $H_+$ is a subgroup of $G$. Note that $H_+$ is not equal to $G$, otherwise, we will obtain 
$$
|A|\cdot \frac{|G|}{2}
\geq |A|^2
= \sum_{g\in G} |A\cap Ag| 
\geq |G|\cdot  r|A|,$$
which yields $r\leq \frac 12$. 

The following estimate 
$$
|A|^2 
= \sum_{g\in G} |A\cap Ag| 
\leq |H_+| |A| + \frac{d\tau}{\varepsilon^2}(\varepsilon + d+ 2)|A||G\setminus H_+| $$
implies 
$$|A|  
\leq |H_+| + \frac{d\tau}{\varepsilon^2}(\varepsilon + d+ 2)(|G| - |H_+|).$$
Using Proposition \ref{Prop:AExists}(1), we obtain 
$$
\left(\frac{1}{2+ \tau + \frac{d\tau}{\varepsilon}}\right) |G| - \frac{d\tau}{\varepsilon^2}(\varepsilon + d+ 2)|G| \leq \left(1 - \frac{d\tau}{\varepsilon^2}(\varepsilon + d+ 2)\right) |H_+|.$$
We claim that $H_+$ is a subgroup of $G$ of index two. To prove this claim, it suffices to show that 
\begin{equation}
\label{Eqn:13rdInePri}
\frac 13 \left(1 - \frac{d\tau}{\varepsilon^2}(\varepsilon + d+ 2)\right)
< 
\left(\frac{1}{2+ \tau + \frac{d\tau}{\varepsilon}}\right) - \frac{d\tau}{\varepsilon^2}(\varepsilon + d+ 2),
\end{equation}
i.e., 
$$\left(2 + \tau + \frac{d\tau}{\varepsilon}\right) \left( 1 + \frac{2d\tau}{\varepsilon^2}(\varepsilon + d+ 2) \right) < 3,$$
which is equivalent to 
\begin{equation}
\label{Eqn:13rdInequality}
\left(\tau + \frac{d\tau}{\varepsilon}\right) + \frac{2d\tau}{\varepsilon^{2}}(\varepsilon + d+ 2)\left(2 + \tau + \frac{d\tau}{\varepsilon}\right) < 1.
\end{equation}
Let $R = \left(\tau + \frac{d\tau}{\varepsilon}\right)$. 
Note that
$$\tau < \frac{1}{8\sqrt{2}}\left(1-\frac{1}{d} \right)^{2}, \frac{d\tau}{\varepsilon} < \frac{1}{8\sqrt{2}}\left(1-\frac{1}{d} \right) \text{ and } R < \frac{1}{8\sqrt{2}} \left(2 - \frac{3}{d} + \frac{1}{d^{2}}\right).$$
From Equation \eqref{Eqn:13rdInequality}, it suffices to show that 
$$R + \frac{1}{4\sqrt{2}}\left(2 + \frac{1}{d} \right)(2+ R) < 1.$$
i.e., it suffices to show that 
$$\frac{1}{8\sqrt{2}} \left(2 - \frac{3}{d} + \frac{1}{d^{2}}\right) + \frac{1}{4\sqrt{2}}\left(2 + \frac{1}{d} \right)\left(2+ \frac{1}{8\sqrt{2}} \left(2 - \frac{3}{d} + \frac{1}{d^{2}}\right)\right)<1.$$
Collecting the terms, it suffices to show that,
$$\left( \frac 5 {4 \sqrt 2} + \frac 1 {16} \right) + \left(\frac 1 {8\sqrt 2} - \frac 1{16}\right) \frac 1d + \left( \frac 1 {8\sqrt 2} - \frac 1 {64}\right)\frac 1{d^2} + \frac 1 {64} \frac 1{d^3 } < 1,$$
which reduces to 
$$(60 - 40 \sqrt 2)d^3 - 4(\sqrt 2 -1) d^2 - (4\sqrt 2 - 1) d - 1>0.$$
The above cubic polynomial in $d$ is positive for $d\geqslant 2$ and hence the claim that $H_+$ is a subgroup of $G$ of index two follows. 

By Proposition \ref{Prop:AExists}(2), $H_-$ does not intersect the set $S$. 
Similar to as before, the following estimate 
$$
|A|^2 
= \sum_{g\in G} |A\cap A^\mo g| 
\leq |H_-| |A| + \frac{d\tau}{\varepsilon^2}(\varepsilon + d+ 2)|A||G\setminus H_-| 
$$
implies 
$$|A|  
\leq |H_-| + \frac{d\tau}{\varepsilon^2}(\varepsilon + d+ 2)(|G| - |H_-|).$$
This inequality combined with Proposition \ref{Prop:AExists}(1) yields  
$$
\left(\frac{1}{2+ \tau + \frac{d\tau}{\varepsilon}}\right) |G| - \frac{d\tau}{\varepsilon^2}(\varepsilon + d+ 2)|G| \leq \left(1 - \frac{d\tau}{\varepsilon^2}(\varepsilon + d+ 2)\right) |H_-|.$$
The inequality in Equation \eqref{Eqn:13rdInePri} (which has been established) implies that 
$$|H_-|> \frac{|G|}{3},$$
and consequently, $H_-$ is nonempty. 
Note that for $h_-\in H_-, h_+\in H_+$, the triangle inequality implies
\begin{align*}
|A\setminus A^\mo h_- h_+|
& \leq |A \setminus Ah_+| + |Ah_+ \setminus A^\mo h_-h_+|  \\
& = |A \setminus Ah_+| + |A \setminus A^\mo h_-| \\
& = |A \setminus Ah_+| + |A \setminus A^\mo h_-| \\
& = |A| - |A \cap Ah_+| + |A| - |A \cap A^\mo h_-| \\
& \leq 2|A| - 2r|A|,
\end{align*}
which yields
$$|A\cap A^\mo h_- h_+| = |A| - |A\setminus A^\mo h_- h_+| \geq |A| - 2|A| + 2r|A| = (2r-1).$$
If $|A\cap A^\mo h_- h_+| \leq (1- r)|A|$, then we will obtain 
$$(1-r) |A| \geq |A\cap A^\mo h_- h_+| \geq  (2r-1),$$
which in turn implies $r\leq \frac 23$. Since $r>\frac 23$, using Proposition \ref{Prop:Dichotomy}(2), we conclude that $|A\cap A^\mo h_- h_+|\geq r |A|$, i.e., $H_-$ contains $h_-h_+$. Thus, $H_-H_+$ is contained in $H_-$. Since $H_-$ is nonempty, it follows that $H_-$ is equal to $H_+$ or $H_-$ is equal to the non-trivial coset of $H_+$ in $G$, i.e., $G\setminus H_+$. If $H_-$ is not equal to $H_+$, then the index two subgroup $H_+$ of $G$ will contain $S$ (since $H_-\cap S = \emptyset$), which contradicts the fact that $S$ generates $G$. So $H_-$ is equal to $H_+$. Consequently, $H_+$ is a subgroup of $G$ of index two avoiding $S$. Thus, the graph $C_\Sigma(G, S)$ is bipartite by Lemma \ref{Lemma:Bipartite}. We are done. 
\end{proof}

\begin{proof}[Proof of Theorem \ref{mainthm}]
Since $C_\Sigma(G, S)$ is connected, its vertex Cheeger constant $h(G)$ is positive. Thus $C_\Sigma(G, S)$ is an $h(G)$-expander with $h(G)>0$. So Theorem \ref{mainthm} follows from Theorem \ref{thmPrincipal}. 
\end{proof}

\begin{proof}
[Proof of Corollary \ref{Corollary}]
From Theorem \ref{mainthm}, it follows that for any $k\geq 1$, the eigenvalues of the normalised adjacency matrix of $C_\Sigma(G_k, S_k)$ of are greater than $- 1+ \frac{\varepsilon^4}{2^9d^8}$, which is depends on $\varepsilon, d$, but not on $k$. Hence the corollary.
\end{proof}

As a consequence of the proof of Theorem \ref{thmPrincipal}, we obtain the following refinement of the bound provided in \cite[Theorem 1.4]{BiswasCheegerCayley}.

\begin{theorem}\label{Bis19}
Let $C(G,S)$ denote the Cayley graph of $G$ with respect to the symmetric generating set $S$ with $|S|=d$. If this graph is non-bipartite and $|G|\neq 3$, then the largest eigenvalue of the normalised Laplacian matrix is less than   
$$ 2 - \frac{h(G)^{4}}{2^{9} d^{8}}.$$
\end{theorem}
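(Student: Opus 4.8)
The plan is to transcribe the proof of Theorem~\ref{thmPrincipal}, with the Cayley sum graph $C_\Sigma(G,S)$ replaced by the Cayley graph $C(G,S)$; one proves the $\varepsilon$-vertex-expander statement for $|G|\neq 3$ and then takes $\varepsilon=h(G)$ (recall that $C(G,S)$ is automatically connected since $S$ generates $G$, so $h(G)>0$). First I would record the Cayley-graph analogue of Lemma~\ref{Lemma:Bipartite}: $C(G,S)$ is bipartite if and only if $G$ has an index-two subgroup disjoint from $S$; the argument is the one in the second half of the proof of Lemma~\ref{Lemma:Bipartite} (in a bipartition the partite set containing the identity consists of the elements expressible as products of an even number of elements of $S$, which is a subgroup avoiding $S$ and of index two). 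Arguing by contradiction, suppose the normalised adjacency matrix $T$ of $C(G,S)$ has an eigenvalue in $[-1,\,-1+\ell_{\varepsilon,d}]$ with $\ell_{\varepsilon,d}=\varepsilon^{4}/(2^{9}d^{8})$; non-bipartiteness excludes $-1$, so the eigenvalue lies in $(-1,\,-1+\ell_{\varepsilon,d}]$ and $T^{2}$ has an eigenvalue in $[(1-\ell_{\varepsilon,d})^{2},1)$. The multigraph $\calM(G,S\times S)$ with an edge from $g$ to $gst$ for each $(s,t)\in S\times S$ is undirected (as $S=S^\mo$) and $d^{2}$-regular with normalised adjacency matrix $T^{2}$, so the discrete Cheeger--Buser inequality (Proposition~\ref{Prop:chin}) and Lemma~\ref{Lemma:VertexEdgeCons} furnish a subset $A\subseteq G$ with $|A|\leqslant\frac12|G|$ and $|ASS\setminus A|\leqslant\beta|A|$, where $\beta=d^{2}\sqrt{2\ell_{\varepsilon,d}(2-\ell_{\varepsilon,d})}$.

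Next I would establish the Cayley-graph versions of Propositions~\ref{Prop:AExists} and~\ref{Prop:Dichotomy}. The only genuine difference from the Cayley sum case is that $S$ need not be a union of conjugacy classes, so the conjugation identity $|SAgSg^\mo\setminus A|=|SAS\setminus A|$ used there is unavailable; the remedy is to work throughout with \emph{left} translates $gA$ in place of the right translates $Ag$, since $|gASS\setminus gA|=|ASS\setminus A|$ holds trivially, left multiplication by $g$ commuting with right multiplication by $SS$. Using the $\varepsilon$-vertex expansion of $C(G,S)$ (whose neighbourhood operator is $V\mapsto VS$), together with the large-set analogue of Lemma~\ref{Lemma:VertexExpnRatioBdd} (an edge-count giving $|VS\setminus V|\geqslant\frac\varepsilon d|G\setminus V|$ for $|V|\geqslant\frac12|G|$) and the bound $\varepsilon\leqslant d-1$ (this is exactly where $|G|\neq 3$ is needed, just as $|G|\geqslant 4$ was used in Proposition~\ref{Prop:AExists}), the same computations give: the lower bound $|A|\geqslant|G|/(2+\beta+d\beta/\varepsilon)$; the bound $|gA\cap gAS|\leqslant\frac\beta\varepsilon|A|$ for every $g\in G$; and the ``flip'' bound, valid for all $g\in G$ and $s\in S$,
\[
|gAs\,\Delta\,(gA)^{c}|=|G|-2|A|+2|A\cap As|\leqslant \frac\beta\varepsilon(\varepsilon+d+2)|A|,
\]
where one uses $A\cap As\subseteq A\cap AS$. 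For the dichotomy, put $B:=A\,\Delta\,(gA)^{c}=A^{c}\,\Delta\,gA$; computing $Bs=(As)^{c}\,\Delta\,gAs$ from the second description gives
\[
Bs\,\Delta\,B=(As\,\Delta\,A^{c})\,\Delta\,(gAs\,\Delta\,(gA)^{c}),
\]
so $|BS\,\Delta\,B|\leqslant\frac{2d\beta}\varepsilon(\varepsilon+d+2)|A|$ by the flip bound, and likewise for $B^{c}=A\,\Delta\,gA$. Applying vertex expansion to whichever of $B,B^{c}$ has size $\leqslant\frac12|G|$ and using $|B^{c}|=|A\,\Delta\,gA|=2|A|-2|A\cap gA|$ yields, for each $g\in G$, that $|A\cap gA|$ is either $\leqslant\frac{d\beta}{\varepsilon^{2}}(\varepsilon+d+2)|A|$ or $\geqslant(1-\frac{d\beta}{\varepsilon^{2}}(\varepsilon+d+2))|A|$.

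Finally I would run Fre\u{\i}man's argument as in Theorem~\ref{thmPrincipal}. With $r=1-\frac{d\beta}{\varepsilon^{2}}(\varepsilon+d+2)$, the choice $\ell_{\varepsilon,d}=\varepsilon^{4}/(2^{9}d^{8})$ makes $\beta$ small enough that $r>\frac23$ and all the numerical inequalities of the proof of Theorem~\ref{thmPrincipal} hold (in particular the positivity of the same cubic in $d$). Set $H:=\{g\in G:|A\cap gA|\geqslant r|A|\}$; since $gA\setminus ghA=g(A\setminus hA)$, the triangle inequality gives $|A\cap ghA|\geqslant(2r-1)|A|$ for $g,h\in H$, which forces $gh\in H$ by the dichotomy, so $H$ (which contains the identity and is inversion-closed) is a subgroup, and the identity $|A|^{2}=\sum_{g\in G}|A\cap gA|$ combined with the lower bound on $|A|$ forces $[G:H]=2$. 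It remains to derive a contradiction by showing that $H$ avoids $S$ (contradicting non-bipartiteness via the bipartiteness criterion) or contains $S$ (contradicting that $S$ generates $G$); as in Theorem~\ref{thmPrincipal}, one produces from the ``flipped'' translates an auxiliary set $H_{-}$ which avoids $S$ (using $|A\cap AS|\leqslant\frac\beta\varepsilon|A|$), satisfies $H_{-}H\subseteq H_{-}$ and $|H_{-}|>\frac13|G|$, hence is a single coset of $H$: if $H_{-}=H$ then $H$ avoids $S$, and if $H_{-}=G\setminus H$ then $S\subseteq H$. Specialising to $\varepsilon=h(G)$ gives $t_{n}>-1+h(G)^{4}/(2^{9}d^{8})$, i.e.\ $\lambda_{n}<2-h(G)^{4}/(2^{9}d^{8})$, sharpening \cite[Theorem~1.4]{BiswasCheegerCayley}.

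The step I expect to be the main obstacle is this last one. With $S$ no longer normal it is no longer automatic which translates ($gA$ versus $Ag$, and $A$ versus $A^\mo$) the auxiliary set $H_{-}$ must be built from so that it is \emph{simultaneously} a coset of $H$, disjoint from $S$, and stable under right translation by $H$ --- in the $C_\Sigma$ setting the normality of $S$ lets both $H_{+}$ and $H_{-}$ interact symmetrically with translation, which is what makes the argument of Theorem~\ref{thmPrincipal} close so smoothly. Getting this bookkeeping right, while checking that the numerology still closes with the same constant $2^{9}d^{8}$, is the one part of the proof that is not a mechanical transcription of Section~\ref{Sec:SharperEstimates}'s precursor, i.e.\ of Section~2.
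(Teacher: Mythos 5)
Your proposal and the paper's proof take genuinely different routes. The paper does not re-derive the Cayley graph result at all: it observes that the entire proof of \cite[Theorem 1.4]{BiswasCheegerCayley} goes through for any admissible $\zeta$, that the only place the specific value of $\zeta$ enters is the final numerical inequality (Equation \eqref{Eqn:OldBdd}), and that this inequality was verified in \emph{loc.\ cit.}\ using only $\epsilon\leq d$. It then proves $\epsilon\leq d-1$ for Cayley graphs (Lemma \ref{Lemma:CayleyEpsiBdd}; this is exactly where $|G|\neq 3$ is used) and checks that with this sharper estimate the same inequality holds for the larger value $\zeta=\epsilon^{4}/(2^{9}d^{8})$. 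So the paper's argument is a short ``delta'' on the cited proof, whereas you propose a self-contained transcription of Theorem \ref{thmPrincipal}.

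More importantly, your transcription has a genuine gap exactly at the step you flag, and it is not mere bookkeeping. For $C(G,S)$ the neighbourhood operator is $V\mapsto VS$, so every estimate involving $S$ that the spectral hypothesis yields --- $|ASS\setminus A|\leq\beta|A|$, $|A\cap AS|\leq\frac{\beta}{\varepsilon}|A|$, $|Ast\setminus A|\leq\beta|A|$ --- concerns \emph{right} translates of $A$; but, as you correctly note, the dichotomy (hence the index-two subgroup) only transcribes for \emph{left} translates $gA$, because $|gAs\cap gA|=|As\cap A|$ while $|Ags\cap Ag|=|A(gsg^\mo)\cap A|$ is uncontrolled when $S$ is not closed under conjugation. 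Consequently $H=\{g:|A\cap gA|\geq r|A|\}$ carries no direct information about $S$, and the auxiliary set $H_-$ you invoke cannot simultaneously avoid $S$ (which forces a definition by right translates, e.g.\ $\{g:|A\cap Ag|\geq r|A|\}$), obey the dichotomy, and absorb $H$ on the right; the triangle inequality $|A\setminus Akh|\leq|A\setminus Ak|+|A\setminus Ah|$ needs $|A\cap Ah|$ large, which membership $h\in H$ does not give. So the closing contradiction does not follow ``as in Theorem \ref{thmPrincipal}''. A genuine extra idea is needed --- for instance, from $|hA\Delta A|\leq 2(1-r)|A|$ for all $h$ in the index-two subgroup $H$ one can show that $A$ is concentrated on a single right coset $Hx_{0}$, whence $|A\cap As|\leq\frac{\beta}{\varepsilon}|A|$ forces $Hx_{0}\cap Hx_{0}s=\emptyset$ for every $s\in S$, hence $x_{0}Sx_{0}^\mo\subseteq G\setminus H$ and $SS\subseteq H\subsetneq G$, contradicting non-bipartiteness --- but this requires new quantitative estimates that are not in your write-up. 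As it stands the proposal is incomplete.
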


\begin{proof}
Suppose $C(G, S)$ is an $\epsilon$-vertex expander with $\epsilon>0$ and it is non-bipartite. We claim that the largest eigenvalue of the normalised Laplacian matrix is less than   
$$ 2 - \frac{\epsilon^{4}}{2^{9} d^{8}}.$$
The bound on this eigenvalue given by \cite[Theorem 1.4]{BiswasCheegerCayley} is 
$$ 2 - \frac{\epsilon^{4}}{2^{9} d^6(d+1)^2}.$$
Note that the proof of this result as in \textit{loc.cit.} crucially relies on the last inequality in \cite[p.306]{BiswasCheegerCayley}, i.e., the inequality
\begin{equation}
\label{Eqn:OldBdd}
\left(\beta + \frac{d\beta}{\epsilon} \right) + \frac{2d\beta}{\epsilon^2}(\epsilon + d+ 2) \left(2 + \beta + \frac{d\beta}{\epsilon} \right) <1
\end{equation}
where $\beta = d^2 \sqrt{2\zeta (2-\zeta)}$. 
This inequality has been established using $\epsilon\leq d$ and the hypothesis that $\zeta = \frac{\epsilon^4}{2^9d^6(d+1)^2}$. The analogue of Equation \eqref{Eqn:OldBdd} in the context of Cayley sum graph is the inequality
$$
\left(\tau + \frac{d\tau}{\varepsilon}\right) + \frac{2d\tau}{\varepsilon^{2}}(\varepsilon + d+ 2)\left(2 + \tau + \frac{d\tau}{\varepsilon}\right) < 1
$$
in Equation \eqref{Eqn:13rdInequality} where $\tau = d^2\sqrt{2\ell_{\varepsilon, d}(2-\ell_{\varepsilon, d})}$. The above inequality has been established using $\varepsilon \leq d-1$ and $\ell_{\varepsilon, d} = \frac{\varepsilon^4}{2^9d^8}$. Hence Equation \eqref{Eqn:OldBdd} will follow for $\zeta = \frac{\varepsilon^4}{2^9d^8}$ if $\epsilon \leq d-1$ holds, which is true by Lemma \ref{Lemma:CayleyEpsiBdd} below. So the claim follows. Noting that $C(G, S)$ is an $h(G)$-vertex expander, and $h(G)>0$ (since the graph $C(G,S)$ is connected), the result follows from the claim.  
\end{proof}

\begin{lemma}
\label{Lemma:CayleyEpsiBdd}
$\epsilon \leqslant (d-1).$
\end{lemma}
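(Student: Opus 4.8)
The statement to prove is Lemma~\ref{Lemma:CayleyEpsiBdd}: that the vertex expansion constant $\epsilon$ of the Cayley graph $C(G,S)$ satisfies $\epsilon \leq d-1$. The plan is to mimic the argument used at the start of the proof of Proposition~\ref{Prop:AExists}, where the analogous bound $\varepsilon \leq d-1$ was derived for Cayley \emph{sum} graphs, but now adapted to the Cayley (difference) graph where the adjacency relation is $h = gs$ rather than $h = g^{\mo}s$.

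First I would recall that $C(G,S)$ is non-bipartite (in the setting of Theorem~\ref{Bis19}), so by the analogue of Lemma~\ref{Lemma:Bipartite} for Cayley graphs, $G$ has no index-two subgroup avoiding $S$; in particular $|G|\geq 2$, and together with the hypothesis $|G|\neq 3$ we may focus on the cases $|G|=2$ and $|G|\geq 4$. For $|G|\geq 4$, pick any $s\in S$ and test the vertex-expansion inequality on the two-element set $V_1 = \{1, s\}$ (which has size $2 \leq |G|/2$). In the Cayley graph the neighbourhood of $V_1$ is $N(\{1,s\}) = S \cup sS$, so $\delta(\{1,s\}) = (S\cup sS)\setminus\{1,s\}$. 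Since $s\in S$ and $1 = s\cdot s^{\mo}\in sS$, we get $|\delta(\{1,s\})| \leq |S\setminus\{s\}| + |sS\setminus\{1\}| \leq 2(d-1)$, and the expansion inequality $\varepsilon|V_1| \leq |\delta(V_1)|$ gives $2\epsilon \leq 2(d-1)$, i.e. $\epsilon\leq d-1$. For $|G|=2$ one has $d=1$, $S=\{s\}$ with $s$ the non-identity element, and the only admissible $V_1$ is a singleton with empty boundary (the graph being a single edge, or a loop-free edge), so $\epsilon = 0 = d-1$; alternatively one simply notes $\epsilon \leq d-1$ is $\epsilon\leq 0$ which holds as the boundary of $\{1\}$ is $\{s\}\setminus\{1\} = \emptyset$ only if $s$ is adjacent to nothing outside — care is needed here, so I would rather just remark that $|G|=2$ forces $C(G,S)$ to be a single edge $K_2$ whose Cheeger constant is $0$.

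The one genuinely delicate point is the excluded case $|G|=3$: there, for $V_1 = \{s\}$ a singleton, the neighbourhood is $\{s^2, s s\} = \{s^2\}$ (if $S = \{s, s^{\mo}\}$ with $s$ of order $3$, note $s^{\mo} = s^2$), giving $|\delta(\{s\})| = |\{s^2\}\setminus\{s\}| = 1$ while $d-1 = 1$, so the bound $\epsilon\leq 1$ still holds — but in fact one could have $\epsilon$ as large as... let me reconsider: with $|V_1|=1$ one gets $\epsilon \leq 1 = d-1$ anyway. So the case $|G|=3$ does not actually obstruct $\epsilon\leq d-1$; the hypothesis $|G|\neq 3$ in Theorem~\ref{Bis19} is presumably needed elsewhere (e.g. to ensure $d\geq 2$, since $|G|=3$ non-bipartite Cayley graph could be the triangle with $d=2$, which is fine, or could have $d=2$ with a loop). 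I expect the main obstacle is simply being careful about small cases and loops: the cleanest writeup tests $V_1=\{1,s\}$ when $|V_1|\leq |G|/2$, i.e. when $|G|\geq 4$, and handles $|G|\in\{2,3\}$ by direct inspection, observing that in each such case either $d=1$ and the bound is trivial, or a singleton test set already yields $\epsilon\leq 1 = d-1$. I would present it as: the proof is identical to the derivation of Equation~\eqref{Eqn:EpsilonBound} in Proposition~\ref{Prop:AExists}, with the Cayley sum adjacency $g\mapsto g^{\mo}s$ replaced throughout by the Cayley adjacency $g\mapsto gs$, which only changes $N(\{1,s\})$ from $S\cup s^{\mo}S$ to $S\cup sS$ without affecting any of the cardinality estimates.
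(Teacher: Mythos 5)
Your main case is correct and in fact cleaner than the paper's own argument: testing the single set $X=\{1,s\}$ gives $N(X)\setminus X\subseteq (S\setminus\{s\})\cup(sS\setminus\{1\})$ (since $1\notin S$ and $1=s\cdot s^{-1}\in sS$), hence $2\epsilon\leq 2(d-1)$, and this works uniformly for all $|G|\geq 4$ with no case split on the order of $s$. The paper instead uses $X=\{1,s,s^{-1}\}$ when $|G|>5$ and $S$ contains an element of order greater than two (getting the slightly stronger $\epsilon\leq d-\frac{4}{3}$), uses $X=\{1,s\}$ only when every element of $S$ has order two, and relegates the residual cases to an unspecified case-by-case analysis; your single test set disposes of all $|G|\geq 4$ at once.

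However, your treatment of the small groups contains two concrete errors. First, the vertex Cheeger constant of $K_2$ is $1$, not $0$: the boundary of $\{1\}$ is $\{s\}$, which is nonempty. The correct way to dismiss $|G|=2$ is that $C(G,S)$ is then bipartite, which is excluded by the hypothesis of Theorem \ref{Bis19}. Second, and more seriously, your claim that $|G|=3$ ``does not actually obstruct $\epsilon\leq d-1$'' is false. For $G=\mathbb{Z}/3\mathbb{Z}$ and $S=\{s,s^{-1}\}$ with $s$ of order $3$, the neighbourhood of the vertex $s$ in the Cayley graph is $sS=\{s^2,1\}$ (you dropped the element $s\cdot s^{-1}=1$), so every singleton has boundary of size $2$; the graph is $K_3$, its vertex Cheeger constant equals $2$, while $d-1=1$, so the inequality genuinely fails for $|G|=3$. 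This is precisely why Theorem \ref{Bis19} carries the hypothesis $|G|\neq 3$ --- it is not ``needed elsewhere'' as you speculate, it is needed exactly here. Once you replace your small-case discussion by the two observations that $|G|=2$ is excluded by non-bipartiteness and $|G|=3$ is excluded by hypothesis, your proof is complete.
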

\begin{proof}
Since $C(G,S)$ is an $\epsilon$-expander, $$\epsilon |X|\leqslant |SX\setminus X|, \forall \emptyset \neq X \subseteq G \text{ such that } |X|\leqslant \frac{|G|}{2}. $$ 
Let $|G| > 5$ and $S$ contains an element $s$ such that $s\neq s^{-1}$. Let $X = \lbrace 1,s,s^{-1}\rbrace$. Then 
$$3\epsilon = \epsilon |X|\leqslant |S\lbrace 1,s,s^{-1}\rbrace \setminus \lbrace 1,s,s^{-1}\rbrace |\leqslant 3|S| - 4 \Rightarrow \epsilon \leqslant d-\frac{4}{3} < (d-1).$$ 
If $|G|\geq 4$ and all elements of $S$ have order $2$ and then choose $X = \lbrace 1,s\rbrace $ for some $s\in S$. Proceeding as above, it is clear in this case that $2\epsilon \leqslant 2d-2 $ or $\epsilon \leqslant (d-1)$. In the remaining cases, the inequality follows by a case by case analysis on the size of $G$. This proves the Lemma. 
\end{proof}

\section{Sharper estimates}
\label{Sec:SharperEstimates}

\begin{lemma}
\label{Lemma:epsilon2}
Suppose the Cayley sum graph $C_\Sigma(G, S)$ is non-bipartite and no symmetric set $T$ satisfying $\emptyset \neq T \subsetneq S$ generates $G$. If $C_\Sigma(G, S)$ is $\varepsilon$-vertex expander with $\varepsilon>0$, then 
$\varepsilon\leq 2$. 
\end{lemma}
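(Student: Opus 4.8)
The plan is to produce, under the stated hypotheses, a nonempty subset $X\subseteq G$ with $|X|\le |G|/2$ whose vertex boundary in $C_\Sigma(G,S)$ satisfies $|\delta(X)|\le 2|X|$; since the graph is an $\varepsilon$-vertex expander, this immediately forces $\varepsilon\le h(C_\Sigma(G,S))\le |\delta(X)|/|X|\le 2$. The candidate for $X$ will be a proper subgroup $H$ of $G$, and I would first record the shape of its boundary: in $C_\Sigma(G,S)$ the neighbours of a vertex $g$ are $\{g^{\mo}s:s\in S\}$, so $N(H)=H^{\mo}S=\bigcup_{s\in S}Hs$, and since $Hs=H$ for $s\in H$ while $Hs$ is disjoint from $H$ for $s\notin H$, one gets $\delta(H)=\bigcup_{s\in S\setminus H}Hs$, a union of $|\{Hs:s\in S\setminus H\}|$ right cosets of $H$. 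It therefore suffices to find a proper subgroup $H$ such that $S\setminus H$ meets at most two right cosets of $H$.

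This is where the minimality hypothesis enters. Fix $s\in S$ and set $S_0=\{s\}$ if $s=s^{\mo}$ and $S_0=\{s,s^{\mo}\}$ otherwise, so that $S_0$ is symmetric. If $S\setminus S_0\ne\emptyset$, then $S\setminus S_0$ is a symmetric set with $\emptyset\ne S\setminus S_0\subsetneq S$, hence by hypothesis $H:=\langle S\setminus S_0\rangle\ne G$; thus $|H|\le|G|/2$, while $S\not\subseteq H$ (otherwise $H=\langle S\rangle=G$), so $S\setminus H$ is a nonempty subset of $S_0$, of size at most two. Hence $\delta(H)$ is a nonempty union of at most two cosets of $H$, so $|\delta(H)|\le 2|H|$ and $\varepsilon\le 2$. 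It remains to treat $S=S_0$. I would first note, exactly as in the proof of Proposition \ref{Prop:AExists} (via Lemma \ref{Lemma:Bipartite} and non-bipartiteness), that $|G|\ge 3$; then $s=s^{\mo}$ is impossible, since it would force $G=\langle s\rangle$ of order at most $2$, so $s\ne s^{\mo}$, $S=\{s,s^{\mo}\}$, and $G=\langle s\rangle$ is cyclic of order $\ge 3$. In that case take $X=\{e\}$: the neighbours of $e$ are $s$ and $s^{\mo}$, and since $|G|\ge 3$ these are two distinct elements both different from $e$, so $\delta(\{e\})=\{s,s^{\mo}\}$ has size $2$ while $|\{e\}|=1\le|G|/2$, again yielding $\varepsilon\le 2$.

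The only point requiring real care — the main ``obstacle'' — is that the minimality hypothesis is vacuous precisely when $S$ is already a single inverse-closed pair (or a single involution), so that case must be handled directly; the non-bipartiteness assumption, through the bound $|G|\ge 3$, is exactly what rules out the lone-involution possibility, and the remaining cyclic case is disposed of by the one-point set. Everything else reduces to the elementary coset count above.
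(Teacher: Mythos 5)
Your proof is correct and follows essentially the same route as the paper's: remove an inverse-closed pair $\{s,s^{-1}\}$ from $S$, invoke the minimality hypothesis to get a proper subgroup $H=\langle S\setminus\{s,s^{-1}\}\rangle$ with $|H|\le|G|/2$, and observe that $\delta(H)\subseteq Hs\cup Hs^{-1}$ has size at most $2|H|$. The only (harmless) divergence is in the degenerate case $|S|\le 2$, where the paper cites the bound $\varepsilon\le d-1$ while you argue directly with the singleton $\{e\}$; both dispositions are valid.
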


\begin{proof}
Note that $S$ contains at least two elements. Otherwise, it contains only one element, and it is of order two (since $S$ is symmetric), in which case $C_\Sigma(G, S)$ is bipartite by Lemma \ref{Lemma:Bipartite}. 
If $S$ contains only two elements, then $\varepsilon \leq d-1 = 1 <2$. 

Suppose $S$ contains at least three elements. Let $s$ be an element of $S$. Note that the $S\setminus\{s, s^\mo\}$ is a nonempty symmetric subset of $S$. Let $H$ denote the subgroup of $G$ generated by the $S\setminus\{s, s^\mo\}$. Since $|H|\leq \frac{|G|}{2}$, we obtain 
$$\varepsilon |H|
\leq |H^\mo S \setminus H| 
= |HS \setminus H| 
\leq |H\cdot \{s, s^\mo \}|,$$
which yields $\varepsilon \leq 2$. 
\end{proof}

\begin{theorem}
\label{Thm:Sharp}
Suppose $C_\Sigma(G, S)$ is an $\varepsilon$-vertex expander for some $\varepsilon>0$. Assume that this graph is not bipartite, and no symmetric set $T$ satisfying $\emptyset \neq T \subsetneq S$ generates $G$. 
Set 
\begin{equation}
\label{Eqn:redefined}
\ell_{\varepsilon, d}
=\frac{\varepsilon^4}{\kappa d^8}.
\end{equation}
If $d\geq d_0$, then the eigenvalues of the normalised adjacency matrix of this graph are greater than $-1 + \ell_{\varepsilon, d}$ whenever $\kappa$ and $d_0$ take the values as in Table \ref{Table}. 
\end{theorem}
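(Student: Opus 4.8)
The plan is to run the proof of Theorem \ref{thmPrincipal} with only two substitutions. First, the bound $\varepsilon\le d-1$ used there (Equation \eqref{Eqn:EpsilonBound}) is replaced by the sharper bound $\varepsilon\le 2$ supplied by Lemma \ref{Lemma:epsilon2}, whose hypothesis --- that no proper nonempty symmetric subset of $S$ generates $G$ --- is precisely the extra assumption of Theorem \ref{Thm:Sharp}. Second, the value $\ell_{\varepsilon,d}=\varepsilon^4/(2^9d^8)$ is replaced by the value $\ell_{\varepsilon,d}=\varepsilon^4/(\kappa d^8)$ of Equation \eqref{Eqn:redefined}. One then argues by contradiction: assuming an eigenvalue of the normalised adjacency matrix lies in $[-1,-1+\ell_{\varepsilon,d}]$, non-bipartiteness together with Lemma \ref{Lemma:Bipartite} forces it into $(-1,-1+\ell_{\varepsilon,d}]$, and one sets $\tau=d^2\sqrt{2\ell_{\varepsilon,d}(2-\ell_{\varepsilon,d})}$ and $r=1-\frac{d\tau}{\varepsilon^2}(\varepsilon+d+2)$ exactly as before.

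The only part of the proof that has to be redone is the verification of the three inequalities of Equation \eqref{Eqn:Bounds},
$$\ell_{\varepsilon,d}\le\frac{\varepsilon^2}{4d^4},\qquad \tau<\frac{\varepsilon^2}{4d(d+1)},\qquad r>\frac23,$$
together with the key inequality \eqref{Eqn:13rdInequality},
$$\left(\tau+\frac{d\tau}{\varepsilon}\right)+\frac{2d\tau}{\varepsilon^{2}}(\varepsilon+d+2)\left(2+\tau+\frac{d\tau}{\varepsilon}\right)<1,$$
now under the hypotheses $\varepsilon\le2$ and $\ell_{\varepsilon,d}=\varepsilon^4/(\kappa d^8)$. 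The gain comes from the scaling: one has $\tau<2d^2\sqrt{\ell_{\varepsilon,d}}=2\varepsilon^2/(\sqrt\kappa\,d^2)$, whence $\frac{d\tau}{\varepsilon}<2\varepsilon/(\sqrt\kappa\,d)\le4/(\sqrt\kappa\,d)$ and $\frac{d\tau}{\varepsilon^2}<2/(\sqrt\kappa\,d)$; in the last two estimates the awkward factor $1/\varepsilon$ is fully absorbed, and the residual $\varepsilon$ is pinned down by $\varepsilon\le2$. Substituting these bounds together with $\varepsilon+d+2\le d+4$, the inequality \eqref{Eqn:13rdInequality} becomes, after clearing denominators, a polynomial inequality in $d$ with coefficients depending on $\kappa$, whose substituted upper bound for the left-hand side tends to $8/\sqrt\kappa$ as $d\to\infty$; the other three inequalities reduce to the $\varepsilon$-free conditions $\kappa d^4\ge16$, $8+8/d\le\sqrt\kappa$ and $6+24/d<\sqrt\kappa$. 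Hence all four hold once $\kappa$ exceeds $64$ and $d$ is at least a threshold $d_0$ that grows as $\kappa$ approaches $64$; the entries of Table \ref{Table} are exactly the admissible pairs $(\kappa,d_0)$ for which this check succeeds.

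Granting Equation \eqref{Eqn:Bounds} and \eqref{Eqn:13rdInequality}, the rest of the argument is verbatim that of Theorem \ref{thmPrincipal}. Proposition \ref{Prop:AExists} applies since $\ell_{\varepsilon,d}\le\varepsilon^2/(4d^4)$, producing a subset $A\subseteq G$ satisfying conditions (1)--(5); Proposition \ref{Prop:Dichotomy} applies since $\tau<\varepsilon^2/(4d(d+1))$; the set $H_+=\{g\in G:|A\cap Ag|\ge r|A|\}$ is a subgroup of $G$ by the dichotomy together with $r>\frac23$, it is proper because $H_+=G$ would force $r\le\frac12$, and it has index two by combining Proposition \ref{Prop:AExists}(1) with the inequality \eqref{Eqn:13rdInePri} (which is equivalent to \eqref{Eqn:13rdInequality}); the set $H_-=\{g\in G:|A\cap A^\mo g|\ge r|A|\}$ avoids $S$ by Proposition \ref{Prop:AExists}(2), has size exceeding $|G|/3$, and is stable under right translation by $H_+$, so $H_-$ equals $H_+$ or $G\setminus H_+$. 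In the first case $H_+$ is an index-two subgroup avoiding $S$, contradicting non-bipartiteness via Lemma \ref{Lemma:Bipartite}; in the second case $H_+$ contains $S$, contradicting that $S$ generates $G$.

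The main obstacle is the quantitative bookkeeping of the second paragraph: one must determine, for each admissible $\kappa$, the smallest $d_0$ for which the four inequalities hold simultaneously, the binding constraint being \eqref{Eqn:13rdInequality}, whose limiting value $8/\sqrt\kappa$ prevents $\kappa$ from being pushed all the way to $64$. Beyond this optimization no new idea is required; the sole structural input is Lemma \ref{Lemma:epsilon2}, whose conclusion $\varepsilon\le2$ is what allows the constant $2^9$ of Theorem \ref{thmPrincipal} to be replaced by the smaller $\kappa$.
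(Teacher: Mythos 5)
Your proposal is correct and follows essentially the same route as the paper: rerun the proof of Theorem \ref{thmPrincipal}, observing that it depends on $\ell_{\varepsilon,d}$ only through Equations \eqref{Eqn:Bounds} and \eqref{Eqn:13rdInequality}, and re-verify those using $\tau<2\varepsilon^2/(\sqrt\kappa\,d^2)$ together with the bound $\varepsilon\le 2$ from Lemma \ref{Lemma:epsilon2} in place of $\varepsilon\le d-1$. Your reductions (the limiting value $8/\sqrt\kappa$ forcing $\kappa>64$, and the three auxiliary $\varepsilon$-free conditions) are consistent with the paper's explicit bound $\bigl(2\sqrt\kappa\,d(4d^2+18d+4)+16(d+2)(d+4)\bigr)/(\kappa d^3)<1$, which it checks against the tabulated pairs $(\kappa,d_0)$.
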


\begin{table}
\centering
\begin{tabular}{cc} 
\hline
$\kappa$ & $d_0$ \\
\hline 
477 & 3 \\ 
330 & 4 \\
257 & 5 \\
214 & 6 \\
187 & 7 \\
167 & 8 \\
153 & 9\\
142 & 10\\
 \hline \\
\end{tabular}
\caption{Comparison of the absolute constant $\kappa$ and the lower bound $d_{0}$ of the degree $d$ in the context of Theorem \ref{Thm:Sharp}.}
\label{Table}
\end{table}

\begin{proof}
Note that the proof of Theorem \ref{thmPrincipal} depends on $\ell_{\varepsilon, d}$ through Equation \eqref{Eqn:Bounds} and \eqref{Eqn:13rdInequality}. Hence it suffices to prove that these two equations hold for the redefined $\ell_{\varepsilon, d}$ as in Equation \eqref{Eqn:redefined}. If $\kappa \geq 144$, then the inequality 
$$
\tau
< 2d^2\sqrt{\ell_{\varepsilon, d}} 
= \frac{2}{\sqrt \kappa } \frac{\varepsilon^2}{d^2},$$
implies that Equation \eqref{Eqn:Bounds} holds. By Lemma \ref{Lemma:epsilon2}, we obtain $\varepsilon\leq 2$. Using this estimate, it turns out that
\begin{align*}
& \tau + \frac{d\tau}{\varepsilon} + \frac{2d\tau}{\varepsilon^{2}}(\varepsilon + d+ 2)\left(2 + \tau + \frac{d\tau}{\varepsilon}\right)\\
& = \frac{\tau}{\varepsilon}(\varepsilon + d) + \frac{2d\tau}{\varepsilon^{2}}(\varepsilon + d+ 2)\left(2 + \frac{\tau}{\varepsilon}(\varepsilon+ d)\right) \\
& = \frac{\tau}{\varepsilon}\left(\varepsilon +d+ \frac{4d(\varepsilon +d+ 2)}{\varepsilon} \right) + \frac{2d\tau^2}{\varepsilon^3}(\varepsilon+d)(\varepsilon +  d + 2)\\
& = \frac{\tau}{\varepsilon^2}\left(\varepsilon(\varepsilon +d)+ 4d(\varepsilon +d+ 2)\right) + \frac{2d\varepsilon \tau^2}{\varepsilon^4}(\varepsilon+d)(\varepsilon +  d + 2)\\
& \leq \frac{\tau}{\varepsilon^2}\left(2 (d+2) + 4d(d+4) \right) + \frac{4d\tau^2}{\varepsilon^4} (d+2)(d+4)\\
& = \frac{\tau}{\varepsilon^2}\left(4d^2 + 18d + 4\right) + \frac{4d\tau^2}{\varepsilon^4} (d+2)(d+4)\\
& < \frac{2}{\sqrt \kappa d^2}\left(4d^2 + 18d + 4\right) + \frac{16}{\kappa d^3} (d+2)(d+4)\\
& = \frac{2\sqrt \kappa d (4d^2 + 18d + 4) + 16(d+2)(d+4)}{\kappa d^3}
\end{align*}
is less than $1$, i.e., the inequality in Equation \eqref{Eqn:13rdInequality} holds whenever $d\geq d_0$, and $\kappa$ and $d_0$ take the prescribed values. Hence the conclusion of Theorem \ref{thmPrincipal} holds when $\ell_{\varepsilon, d}$ is redefined as above, and $\kappa$ and $d$ satisfy the given conditions. 
\end{proof}

Note that Lemma \ref{Lemma:epsilon2} holds when $C_\Sigma(G,S)$ is replaced by $C(G, S)$. Hence Theorem \ref{Thm:Sharp} remains valid even when the Cayley sum graph $C_\Sigma(G, S)$ is replaced by the Cayley graph $C(G, S)$. 

\def\cprime{$'$} \def\Dbar{\leavevmode\lower.6ex\hbox to 0pt{\hskip-.23ex
  \accent"16\hss}D} \def\cfac#1{\ifmmode\setbox7\hbox{$\accent"5E#1$}\else
  \setbox7\hbox{\accent"5E#1}\penalty 10000\relax\fi\raise 1\ht7
  \hbox{\lower1.15ex\hbox to 1\wd7{\hss\accent"13\hss}}\penalty 10000
  \hskip-1\wd7\penalty 10000\box7}
  \def\cftil#1{\ifmmode\setbox7\hbox{$\accent"5E#1$}\else
  \setbox7\hbox{\accent"5E#1}\penalty 10000\relax\fi\raise 1\ht7
  \hbox{\lower1.15ex\hbox to 1\wd7{\hss\accent"7E\hss}}\penalty 10000
  \hskip-1\wd7\penalty 10000\box7}
  \def\polhk#1{\setbox0=\hbox{#1}{\ooalign{\hidewidth
  \lower1.5ex\hbox{`}\hidewidth\crcr\unhbox0}}}
\providecommand{\bysame}{\leavevmode\hbox to3em{\hrulefill}\thinspace}
\providecommand{\MR}{\relax\ifhmode\unskip\space\fi MR }
\providecommand{\MRhref}[2]{%
  \href{http://www.ams.org/mathscinet-getitem?mr=#1}{#2}
}
\providecommand{\href}[2]{#2}

\end{document}